\theoremstyle{plain}
\newtheorem{Theorem}{Theorem}[section]
\newtheorem{Corollary}{Corollary}[Theorem]
\newtheorem{Lemma}{Lemma}[section]
\newtheorem{LemCorollary}{Corollary}[Lemma]
\newtheorem{Proposition}{Proposition}[section]
\theoremstyle{definition}
\newtheorem{Definition}{Definition}[section]
\theoremstyle{remark}
\newtheorem*{Remark}{Remark}
\theoremstyle{plain}
\newtheorem{Remarkno}{Remark}[section]
\newcommand{\Pro}{\mathbb{P}}
\newcommand{\C}{\mathbb{C}}
\newcommand{\Z}{\mathbb{Z}}
\newcommand{\Pic}{\mathrm{Pic}}
\newcommand{\Aut}{\mathrm{Aut}}
\title{Equivariant birational geometry of quintic del Pezzo surface}
\author{Jonas Wolter}    
\begin{document}
%	\pagestyle{fancy}
%	\lhead{Jonas Wolter}
%	\chead{HCoV Skills}
%	\rhead{\today}
	
\maketitle

\begin{abstract}
	In this paper we prove that there are exactly two $G$-minimal surfaces which are $G$-birational to the quintic del Pezzo surface, where $G \cong C_5 \rtimes C_4$. These surfaces are the quintic del Pezzo surface itself and the surface $\Pro^1 \times \Pro^1$.
\end{abstract}

\section{Introduction}
The study of the finite subgroups of the \emph{Cremona Group} is classical, but the first serious treatment has been done by Igor V. Dolgachev and Vasily A. Iskovskikh at the beginning of this century, starting with Iskovskikh's paper \cite{Isko}. In their seminal work \cite{Do-Is} all finite subgroups of the Cremona group $\mathrm{Cr}_2(\C)$ are classified up to isomorphism. In the section "What is left" in \cite{Do-Is} it is stated that not all conjugacy classes of $\mathrm{Cr}_2(\C)$ are known and that a finer description of the the conjugacy classes would be desirable.\\
 Let us recall from \cite{Isko} that two subgroups of the Cremona Group given by the biregular actions of a finite group $G$ on two rational surfaces are conjugate if there exist a $G$-birational map $S_1 \dashrightarrow S_2$. By general theory such a map can be factorised into elementary links \cite{Isk_Factor}. In this paper we will contribute to the open questions from \cite{Do-Is} by proving:
  
\begin{Theorem}
	\label{Main_theorem}
	Let $S_5$ be the smooth del Pezzo surface of degree $5$, and let $G_{20}\cong C_5 \rtimes C_4$ be a subgroup of order $20$ in  $\Aut\left(S_5\right)$. Then $\mathrm{Pic}^{G_{20}}(S_5)=\mathbb{Z}$ and
	\begin{enumerate}[label=\arabic*)]
		\item $S_5$ is not $G_{20}$-birational to any conic bundle,
		\item there exists a unique $G$-minimal del Pezzo surface which is \\$G_{20}$-birational to $S_5$, that is $\mathbb{P}^1 \times \mathbb{P}^1$,
		\item the group of $G_{20}$-birational automorphisms is given by\\ $\mathrm{Bir}^{G_{20}}(S_5)=C_2 \times G_{20}$.
	\end{enumerate}
	Here $C_n$ is a cyclic group of order $n$. 	It should be noticed that there are no $G$-conic fibrations birational to $S_5$.
\end{Theorem}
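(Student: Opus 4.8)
The plan is to run the $G_{20}$-equivariant Minimal Model Program and Sarkisov program for rational surfaces, following Iskovskikh and Dolgachev--Iskovskikh, and to read off all three assertions from a complete list of the elementary $G_{20}$-links issuing from $S_5$. I would first fix a model in which $\Aut(S_5)\cong\mathfrak{S}_5$ acts on the ten $(-1)$-curves through the natural action on the ten $2$-element subsets of $\{1,\dots,5\}$ (the Petersen graph / duad picture), and realise $G_{20}\cong C_5\rtimes C_4$ as the normaliser of a $5$-Sylow, generated by a $5$-cycle $\sigma$ and an order-$4$ element $\tau$ inducing $\sigma\mapsto\sigma^2$. The statement $\Pic^{G_{20}}(S_5)=\Z$ I would get by a short character computation: $\Pic(S_5)\otimes\Q=\Q\langle K_{S_5}\rangle\oplus V$ with $V$ the standard $4$-dimensional representation of $\mathfrak{S}_5$, and averaging the character of $V$ over the cycle types occurring in $G_{20}$ (one identity, four $5$-cycles, five double transpositions, ten $4$-cycles) gives $\dim V^{G_{20}}=\tfrac1{20}(4-4)=0$. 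Hence the invariant lattice is the rank-$1$ saturated sublattice $\Z\langle K_{S_5}\rangle$; since every invariant class is a multiple of $K_{S_5}$ and $K_{S_5}^2=5\neq0$, there is no $G_{20}$-invariant fibration, so $S_5$ is a $G_{20}$-Mori fibre space over a point, i.e. a $G_{20}$-minimal del Pezzo surface.

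The geometric heart is the classification of short $G_{20}$-orbits on $S_5$, since these are the only possible centres of elementary links. Because the ten $(-1)$-curves form a single $G_{20}$-orbit, no $G_{20}$-equivariant contraction of $(-1)$-curves exists, so every link must begin by blowing up a $G_{20}$-orbit of points of size $\le 4$; I would enumerate these (sizes $1,2,4$) together with their position (collinearity, incidence with the $(-1)$-curves, lying on a conic), using the $\overline{M}_{0,5}$ description of $S_5$ and the fixed-point loci of $\sigma$ and $\tau$. Feeding each admissible orbit into the link machinery, I expect exactly one link to another $G_{20}$-minimal del Pezzo, namely to $\Pro^1\times\Pro^1$ (on which $G_{20}$ must swap the two rulings, as otherwise both projections to $\mathrm{PGL}_2(\C)$ would be cyclic and force $G_{20}$ abelian, so that $\Pic^{G_{20}}(\Pro^1\times\Pro^1)=\Z\langle -\tfrac12K\rangle$ is again of rank $1$), and I would verify the degree bookkeeping of this link explicitly. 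Assertions (1) and (2) are then two faces of the same statement: the complete list of $G_{20}$-minimal models in the birational class of $S_5$ consists of $S_5$ and $\Pro^1\times\Pro^1$, both del Pezzo with invariant Picard rank $1$, so no $G_{20}$-minimal conic bundle occurs.

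I would caution that the crude representation-theoretic obstruction does not settle (1) by itself: $G_{20}$ is an extension $1\to C_5\to G_{20}\to C_4\to 1$ with both $C_5$ and $C_4$ embeddable in $\mathrm{PGL}_2(\C)$, so a priori $G_{20}$ could act on a conic bundle with $C_5$ acting fibrewise and $C_4$ on the base. Ruling this out therefore genuinely requires the link enumeration to be exhaustive and to track minimality after each blow-up and contraction; this completeness is the step I expect to be the main obstacle, both in proving that the list of admissible orbits is finite and complete and in checking that no chain of links ever stabilises on a minimal conic bundle.

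Finally, for (3) I would assemble $\Bir^{G_{20}}(S_5)$ from the groupoid of links between the two minimal models. The biregular $G_{20}$-equivariant maps are controlled by the normaliser of $G_{20}$ inside $\Aut(S_5)\cong\mathfrak{S}_5$ (which is $G_{20}$ itself, as it is self-normalising) and inside $\Aut(\Pro^1\times\Pro^1)$; the only genuinely new element is a $G_{20}$-equivariant birational involution $\iota$, arising from the interchange of the two rulings on $\Pro^1\times\Pro^1$ transported back to $S_5$ through the link. Checking that $\iota$ commutes with $G_{20}$, that $\iota^2=\mathrm{id}$, and that the elementary relations among the links impose no further generators and realise no outer automorphism of $G_{20}$, would yield $\Bir^{G_{20}}(S_5)=\langle G_{20},\iota\rangle\cong C_2\times G_{20}$.
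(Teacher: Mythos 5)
Your overall framework --- run the $G_{20}$-equivariant Sarkisov program and enumerate the short orbits that can serve as centres of elementary links --- is exactly the paper's, and your character computation proving $\Pic^{G_{20}}(S_5)=\Z$ is correct (indeed more self-contained than the paper, which quotes Theorem 6.4 of Dolgachev--Iskovskikh). The genuine gap is that the geometric heart of the argument is announced but never carried out; everything after ``I expect exactly one link'' is a plan rather than a proof, and you yourself flag its completeness as ``the main obstacle''. Concretely, three ingredients are missing. First, the orbit classification on $S_5$: one must prove that the only $G_{20}$-orbit of length at most $4$ is a single orbit of length $2$ (the paper's \autoref{Lem_OrbitsDP5}, obtained by transporting the $C_5$- and $D_{10}$-fixed-point analysis through the link to $\Pro^2$ of \autoref{Lem_C5P2} and \autoref{Prop_D10DP5}), and that its two points are in general position so that the blow-up is del Pezzo (\autoref{Lem_SmoothBlowupS5}). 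Second, the intermediate surface --- the Clebsch cubic $\widetilde{S}$ --- never appears in your proposal, yet it is what makes the enumeration close up: since $\Pic^{G_{20}}(\widetilde{S})=\Z^2$, it admits exactly two extremal contractions, the inverse blow-up to $S_5$ and the contraction of the five disjoint lines of \autoref{Cor_5skewlinesClebsch} onto $\Pro^1\times\Pro^1$ (\autoref{Prop_Contraction5lines}); this simultaneously produces the unique link of \autoref{MaintechnicalResult} and excludes any type II link from $S_5$ to a conic bundle, i.e.\ assertion (1). Third, assertion (2) also requires classifying orbits on $\Pro^1\times\Pro^1$ and showing every link from there returns to $S_5$: the unique orbit of length $4$ cannot be blown up because the four rulings through it become $(-2)$-curves (\autoref{Lem_OrbitP1xP14}, \autoref{Lem_BlowupOrbit4P1xP1}), while the two orbits of length $5$ blow up back to $\widetilde{S}$ (\autoref{Lem_OrbitsP1xP15}, \autoref{Lem_Blowuporbit5P1xP1}). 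Without these steps neither (1) nor (2) is established.

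For assertion (3) your skeleton (normalizers at the two minimal models plus one new commuting birational involution) matches the paper's \autoref{Thm_G40}, and $\operatorname{Norm}_{\mathfrak{S}_5}(G_{20})=G_{20}$ is correct; but the new involution is misidentified, which indicates the normalizer computation was not actually done. It is not ``the interchange of the two rulings'': since $\Pic^{G_{20}}(\Pro^1\times\Pro^1)=\Z$ and $D_{10}=G_{20}\cap H$ (where $H$ preserves the rulings), the elements of $G_{20}\setminus D_{10}$ already interchange the rulings, and the central involution produced by the paper's computation is $ab\colon([x_1:y_1],[x_2:y_2])\mapsto([-x_1:y_1],[-x_2:y_2])$, which preserves both rulings; in fact every involution in $\operatorname{Norm}_{\Aut(\Pro^1\times\Pro^1)}(G_{20})\setminus G_{20}$ does. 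The substance of (3) is precisely the computation $\operatorname{Norm}_{\Aut(\Pro^1\times\Pro^1)}(G_{20})\cong C_2\times G_{20}$, which the proposal omits. (Your parenthetical argument that $G_{20}$ must swap the rulings is fine once completed: if one projection to $\mathrm{PGL}_2(\C)$ were injective, $G_{20}$ would be a finite subgroup of $\mathrm{PGL}_2(\C)$, which it is not; otherwise both images are proper quotients of $G_{20}$, hence cyclic, forcing $G_{20}$ abelian.)
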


In the notation of  \cite{ahmadinezhad2015birationally} we can say that $S_5$ is $G_{20}$-solid.
\begin{Remark}
	In the proof of \autoref{Main_theorem} we will also see that the only smooth del Pezzo surfaces $G$-birational to $S_5$ are $\mathbb{P}^1 \times \mathbb{P}^1$ and the Clebsch cubic surface. But the latter is not $G_{20}$-minimal, i.e. $\Pic^{G_{20}}(\widetilde{S}) \neq \Z$. Indeed we will show in \autoref{Prop_Picard_Z2_Clebsch}, that its $G_{20}$-invariant Picard group is $\Z^2$.
\end{Remark}

Throughout this paper we assume all varieties to be complex and projective. For all notation in birational geometry, such as \emph{$G$-biregular}, we use the conventions introduced in \cite{Do-Is}.

\section{$G$-Sarkisov links}
\label{Sec_GSarkisov}
We will dedicate this section to the introduction of the notion of \emph{$G$-Sarkisov links} where $G$ is a finite group. For simplicity we will only consider the dimension 2 here. For a more detailed study see \cite{Corti}. This language will allow us to state \autoref{Main_theorem} in a more precise and technical way. We will firstly define a \emph{$G$-Mori fibre space}.

\begin{Definition}
	\label{Defn_GMori}
	A 2-dimensional $G$-Mori fibre space is 
	\begin{itemize}
		\item[$\mathrm{DP}$:] a smooth $G$-minimal del Pezzo surface $S$, i.e. $\Pic^G(S)=\Z$.
		\item[$\mathrm{CB}$:] a $G$-conic bundle, i.e. a $G$-equivariant morphism $\pi: S \to \Pro^1$, where $S$ is a smooth surface and the general fibre of $\pi$ is $\Pro^1$, such that $\Pic^G(S)=\Z^2$.
	\end{itemize}
\end{Definition}

The main result about 2-dimensional $G$-Sarkisov link is the following:

\begin{Theorem}[\cite{Corti}]
	Let $S$, $S'$ be 2-dimensional $G$-Mori fibre spaces and let $\chi: S \dashrightarrow S'$ be a non-biregular $G$-birational map. Then $\chi$ is a composition of elementary links known as $G$-Sarkisov links.
\end{Theorem}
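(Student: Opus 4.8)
The statement is the $2$-dimensional Sarkisov program in the $G$-equivariant category, so the plan is to run the standard Sarkisov machine of \cite{Corti} while keeping every operation $G$-equivariant; in dimension $2$ the geometry is concrete enough that each step is explicit. First I would replace $\chi$ by a $G$-equivariant resolution: by equivariant resolution of indeterminacy there is a smooth $G$-surface $W$ with $G$-morphisms $p \colon W \to S$ and $q \colon W \to S'$ such that $\chi = q \circ p^{-1}$, each being a composition of blow-ups of $G$-orbits of (possibly infinitely near) points. On the target I fix a $G$-invariant mobile linear system $\mathcal{H}'$ defining the Mori fibre structure of $S'$, e.g.\ a subsystem of $|{-n K_{S'}}|$, and let $\mathcal{H} = \chi^{-1}_*\mathcal{H}'$ be its strict transform on $S$: a $G$-invariant mobile system whose class is $-\mu K_S$ modulo pullbacks from the base of $S$, for some rational $\mu > 0$.

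The engine of the proof is a \emph{Sarkisov degree}, a well-ordered invariant attached to the pair $\bigl(S, \tfrac1\mu \mathcal{H}\bigr)$ and built from the quasi-effective threshold $\mu$, the canonical threshold $c = c\bigl(S, \tfrac1\mu \mathcal{H}\bigr)$, and the number of $G$-orbits of \emph{maximal singularities}, i.e.\ divisors over $S$ realising the threshold $c$. The technical heart is the $G$-equivariant Noether--Fano--Iskovskikh inequality, which I would establish by comparing the degrees of $\mathcal{H}$ on $W$ under $p$ and $q$: if the pair $\bigl(S, \tfrac1\mu \mathcal{H}\bigr)$ is canonical and admits no maximal singularity relative to the fibration, then $\chi$ is already a $G$-isomorphism of Mori fibre spaces. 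Equivalently, a non-biregular $\chi$ forces the existence of a $G$-invariant set of maximal centres. On surfaces this is elementary intersection theory, since a maximal centre is just a point --- or $G$-orbit of points --- where the multiplicity of $\mathcal{H}$ exceeds $\mu$.

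Given such a maximal $G$-orbit, I would untwist it by an elementary $G$-Sarkisov link, run as an equivariant two-ray game: blowing up the orbit raises the relative Picard rank to $2$, and one examines the two extremal contractions of the resulting relative cone of curves, contracting the one distinct from the original fibration. Since $G$ either fixes each of the two extremal rays or interchanges them, both contractions are defined over $G$, and in dimension $2$ the output is again a smooth $G$-Mori fibre space joined to the previous one by a link of one of the standard types (I--IV). The essential point to verify is that each such link strictly decreases the Sarkisov degree. Because this degree takes values in a well-ordered set --- ordered lexicographically by $\mu$, then $c$, then the orbit count of maximal centres --- the strict decrease forces termination after finitely many links, and the Noether--Fano criterion then certifies that the residual map is a $G$-isomorphism; composing the links gives the factorisation.

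The main obstacle I anticipate is the $G$-equivariance of the two-ray game: one must ensure that the passage through relative Picard rank $2$ respects the group action even when $G$ swaps the two extremal rays, and that no link leaves the category of \emph{smooth} $G$-Mori fibre spaces. In higher dimensions this is exactly where flips and terminal singularities intrude, but in dimension $2$ the links reduce to $G$-orbits of blow-ups and blow-downs, together with the Bertini- and Geiser-type involutions on del Pezzo surfaces, which keeps the whole analysis finite and explicit.
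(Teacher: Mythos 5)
The paper offers no proof of this statement at all --- it is imported as a black box from \cite{Corti}, so the only meaningful comparison is with the cited literature. Your sketch reproduces exactly that standard argument (equivariant resolution of indeterminacy, $G$-invariant mobile linear system, Noether--Fano--Iskovskikh inequality, untwisting of maximal $G$-orbits by a two-ray game, termination via the well-ordered Sarkisov degree), and it is correct in outline; this is the same machine the paper is invoking. Two details would need tightening in a full write-up: when the target $S'$ is a conic bundle $\pi'\colon S' \to \Pro^1$, the system $\mathcal{H}'$ must be taken of the form $\left|-nK_{S'}+\pi'^*A\right|$ with $A$ ample on the base (a subsystem of $\left|-nK_{S'}\right|$ alone need not be very ample, or even mobile), and the two-ray game should be phrased inside the $G$-invariant cone $\overline{NE}(\widehat{S})^G$, whose two extremal faces are $G$-invariant by construction and contractible by the equivariant contraction theorem, rather than via a case analysis of whether $G$ fixes or swaps rays of the full cone.
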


There are 5 different $G$-Sarkisov links of dimension 2 which are described below. The first type is given by

	\begin{align}
	\tag{I}
	\xymatrix{
		&\widehat{S}\ar@{->}[ld]_{\alpha}\ar@{->}[dr]^{\beta} \\
		S &&S'
	}	\label{Sarkisov1}
	\end{align}
	where $S$ and $S'$ are $G$-minimal del Pezzo surfaces and $\alpha$ and $\beta$ are blow ups of $G$-orbits in $S$ and $S'$ respectively. The second type is given by
	\begin{align}
		\tag{II}
	\xymatrix{
		&\widehat{S}\ar@{->}[ld]_{\alpha}\ar@{->}[dr]^{\beta} \\
		S &&\Pro^1
	}	\label{Sarkisov2}	
	\end{align}
	where $S$ is a $G$-minimal del Pezzo surface, $\alpha$ is a blow up of a $G$-orbit and $\beta$ is a $G$-conic bundle. The third type is given by
	\begin{align}
		\tag{III}
	\xymatrix{
		&\widehat{S}\ar@{->}[ld]_{\alpha}\ar@{->}[dr]^{\beta} \\
		\Pro^1 &&S'
	}	\label{Sarkisov3}
	\end{align}
	where $S'$ is a $G$-minimal del Pezzo surface, $\beta$ is a blow up of a $G$-orbit and $\alpha$ is a $G$-conic bundle.	We shall notice that this is the inverse link of type \eqref{Sarkisov2}. The fourth type is given by
\begin{align}
	\tag{IV}
	\xymatrix{
		&\widehat{S}\ar@{->}[ld]_{\alpha}\ar@{->}[dr]^{\beta} \\
		\Pro^1 && \Pro^1
	}	\label{Sarkisov4}
	\end{align}
where $\alpha$ and $\beta$ are $G$-conic bundles. Finally, the fifth type is given by

\begin{align}
\tag{V}
\xymatrix{
	&\widehat{S}\ar@{->}[ld]_{\alpha}\ar@{->}[dr]^{\beta} \\
	S\ar@{->}[d]_{\pi} && S'\ar@{->}[d]^{\pi'} \\
	\Pro^1\ar@{=}[rr] && \Pro^1
} \label{Sarkisov5}
\end{align}
where $S$ and $S'$ are not $G$-minimal del Pezzo surfaces and $\alpha$ and $\beta$ are blow ups of $G$-orbits in $S$ and $S'$ respectively. Additionally, $\pi$ and $\pi'$ are $G$-conic bundles and we call the whole link an \emph{elementary transformation} of $G$-conic bundles (see \cite{Isk_Factor}). This diagram commutes.\\
The notion of $G$-Sarkisov links is a good way to replace the technical result of the Noether-Fano Inequality (see \cite{Do_CAG} and \cite{Isk-NFI}).

\begin{Remarkno}
	\label{Rem_Glinks}
	It follows from the definition of $G$-links that $\widehat{S}$ is a del Pezzo surface, if $S$ is a del Pezzo surface. Thus in the links of type  \eqref{Sarkisov1}, \eqref{Sarkisov2}, \eqref{Sarkisov3} and \eqref{Sarkisov4}, the surface $\widehat{S}$ is a del Pezzo surface.
\end{Remarkno}

Using the notion of $G$-Sarkisov links we are able to restate \autoref{Main_theorem}.

\begin{Theorem}

	\label{MaintechnicalResult}
	Let $S_5$ be the smooth del Pezzo surface of degree 5, and let $G_{20}\cong C_5 \rtimes C_4$ be a subgroup of order 20 in  $\Aut\left(S_5\right)$. Then $\mathrm{Pic}^{G_{20}}(S_5)=\mathbb{Z}$ and the following assertion holds.
	\begin{enumerate}[label=\arabic*)]
		\item There exist a unique $G_{20}$-Sarkisov link that starts at $S_5$. It is given by
	
	\begin{align}
			\tag{$1$}
			\label{MainTechResult1}
			\begin{split}
			\xymatrix{
			&\widetilde{S}\ar@{->}[ld]_{\pi}\ar@{->}[dr]^{\sigma} \\
			S_5 &&\Pro^1 \times \Pro^1}	
			\end{split}
	\end{align}
	where $\pi$ is the blow up of the unique $G_{20}$-orbit of length 2 in $S_5$, $\sigma$ is a blow up of one of two $G_{20}$ orbits of length 5 and $\widetilde{S}$ is the Clebsch cubic surface.
	
	\item Let $\Pro^1 \times \Pro^1$ be equipped with the $G_{20}$-action coming from \eqref{MainTechResult1}. Then the only $G_{20}$-Sarkisov links starting from $\Pro^1 \times \Pro^1$ are the inverse \\ of \eqref{MainTechResult1}, and 
	\begin{align}
	\tag{2}
	\label{MainTechResult2}
	\begin{split}
			\xymatrix{
		&\widetilde{S}\ar@{->}[ld]_{\gamma}\ar@{->}[dr]^{\delta} \\
		\Pro^1 \times \Pro^1 &&S_5
	}	
	\end{split}
	\end{align}

	where $\gamma$ is the blow up of another $G_{20}$-orbit of length 5, $\widetilde{S}$ is the Clebsch cubic surface and $\delta=\pi$ is the blow up of the unique \\ $G_{20}$-orbit of length 2 in $S_5$.
	\end{enumerate}
Combining \eqref{MainTechResult1} and \eqref{MainTechResult2} yields a non-biregular $G_{20}$-birational map $S_5 \dashrightarrow S_5$. 
\end{Theorem}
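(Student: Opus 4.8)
The plan is to classify every $G_{20}$-Sarkisov link issuing from each of the two candidate Mori fibre spaces $S_5$ and $\Pro^1\times\Pro^1$, and the organizing principle is that in each link the contraction $\alpha$ is the blow-up of a single $G_{20}$-orbit $\Gamma$, which raises the invariant Picard rank by exactly one. Concretely, for one orbit one has $(\Pic\,\mathrm{Bl}_\Gamma S)^{G_{20}}=(\Pic S)^{G_{20}}\oplus\Z[\mathcal E]$ with $\mathcal E=\sum_{p\in\Gamma}E_p$, so blowing up a single orbit on a $G_{20}$-minimal del Pezzo always produces invariant Picard rank $2$; a link of type \eqref{Sarkisov1} or \eqref{Sarkisov2} thus amounts to choosing one orbit $\Gamma$ with $|\Gamma|\le K_S^2-1$ whose blow-up is a (weak) del Pezzo and whose second extremal ray is an MFS contraction. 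I would first record that $\Pic^{G_{20}}(S_5)=\Z$: the lattice $K^\perp\cong A_4$ carries the Weyl action of $\Aut(S_5)=W(A_4)\cong C_5\rtimes C_4$'s ambient $S_5$, and the restriction to $G_{20}$ (the Frobenius group $\mathrm{AGL}(1,5)$) of the standard $4$-dimensional representation is its unique $4$-dimensional irreducible, hence has no nonzero invariants, leaving only $\Z\langle -K\rangle$.

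For orbits on $S_5$, let $\sigma$ generate the normal subgroup $C_5\subset G_{20}$. A topological Lefschetz count, using $\chi(S_5)=7$ and $\mathrm{tr}(\sigma\mid H^2)=1+(-1)=0$ (the fixed class $-K$ contributes $1$, and a $5$-cycle has trace $-1$ on the standard representation), gives exactly two $\sigma$-fixed points. Any orbit of length $1,2$ or $4$ has stabilizer containing the unique order-$5$ subgroup, hence consists of $\sigma$-fixed points; as there are only two such points and the order-$4$ part (which conjugates $\sigma$ to $\sigma^2$) interchanges them, there is no $G_{20}$-fixed point and no orbit of length $4$, and $\Gamma_2=\{p_1,p_2\}$ is the unique orbit of length $\le 4$. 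Blowing it up yields a smooth cubic $\widetilde S$ with a faithful $G_{20}$-action, which the classification of automorphism groups of cubic surfaces forces to be the Clebsch cubic; by the reduction above $\Pic^{G_{20}}(\widetilde S)=\Z^2$ (this is \autoref{Prop_Picard_Z2_Clebsch}), so $\widetilde S$ has precisely two $G_{20}$-extremal contractions. One is $\pi$; identifying the second extremal ray with a $G_{20}$-orbit of five skew lines, its contraction $\sigma$ lands on a $G_{20}$-minimal del Pezzo of degree $8$, necessarily $\Pro^1\times\Pro^1$. This produces the link \eqref{MainTechResult1}, shows it is the only link from $S_5$, and in particular rules out any type-\eqref{Sarkisov2} link, proving part 1.

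The analysis on $\Pro^1\times\Pro^1$ is the main obstacle. Here the induced action makes the index-two subgroup $D_5\subset G_{20}$ preserve each ruling while the order-$4$ elements swap them, so $\Pic^{G_{20}}(\Pro^1\times\Pro^1)=\Z$. Again I would count orbits by stabilizer: $C_5$ now has $2\times2=4$ fixed points, the intersections of the two $C_5$-invariant fibres of each ruling, forming a single orbit of length $4$; since any two of them share a ruling fibre, this orbit is in special position and its blow-up carries $(-2)$-curves, hence is only a weak del Pezzo and yields no link. Conversely an order-$4$ element has exactly two fixed points, lying off $\mathrm{Fix}(C_5)$, whose $C_5$-orbits are two distinct orbits of length $5$; blowing up either gives a $G_{20}$-cubic, again the Clebsch cubic, whose second contraction is then forced onto $S_5$. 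The technical heart is precisely the remaining exclusions on $\Pro^1\times\Pro^1$ --- absence of a $G_{20}$-fixed point, absence of a length-$2$ orbit in general position, and absence of any conic-bundle extremal ray --- which requires the explicit special-position bookkeeping and is where I expect the real work to lie. This leaves exactly the inverse of \eqref{MainTechResult1} and the link \eqref{MainTechResult2}, proving part 2.

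Finally, composing \eqref{MainTechResult1} with \eqref{MainTechResult2} gives a $G_{20}$-birational self-map $S_5\dashrightarrow\Pro^1\times\Pro^1\dashrightarrow S_5$. It is biregular if and only if \eqref{MainTechResult2} is the inverse of \eqref{MainTechResult1}; but the inverse of \eqref{MainTechResult1} blows up the length-$5$ orbit contracted by $\sigma$, whereas the morphism $\gamma$ of \eqref{MainTechResult2} blows up the \emph{other} length-$5$ orbit. Since these orbits are distinct, the two links differ, and the composite is a genuinely non-biregular $G_{20}$-birational map $S_5\dashrightarrow S_5$, as asserted.
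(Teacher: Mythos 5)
Your skeleton is the same as the paper's --- classify short orbits on $S_5$, identify the blow-up of $\{Q_1,Q_2\}$ with the Clebsch cubic, use $\Pic^{G_{20}}(\widetilde{S})=\Z^2$ to get exactly two extremal contractions, classify short orbits on $\Pro^1\times\Pro^1$, and enumerate the links --- and two of your local arguments are genuinely different and attractive: the representation-theoretic proof that $\Pic^{G_{20}}(S_5)=\Z$ (irreducibility of the restricted reflection representation, since $G_{20}$ is $2$-transitive on five letters), and the Lefschetz/stabilizer count of $C_5$-fixed points replacing the paper's explicit link to $\Pro^2$ from \autoref{Lem_C5P2}. Nevertheless the proposal has genuine gaps, and they sit exactly where the content of the theorem lies. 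You explicitly defer the exclusions on $\Pro^1\times\Pro^1$ --- absence of a $G_{20}$-fixed point, of a length-$2$ orbit, and of any conic-bundle extremal ray --- as ``where I expect the real work to lie.'' Those exclusions \emph{are} the uniqueness assertion of part 2); a proof that postpones them has not proved the theorem. The paper closes this step by computing all orbits of length $<8$ on the Clebsch cubic in coordinates (\autoref{Lem_OrbClebsch4}, \autoref{Lem_OrbClebsch5}, \autoref{Prop_OrbitClebsch}) and then lifting: any orbit of length $<4$ on $\Pro^1\times\Pro^1$ is disjoint from the blown-up length-$5$ orbit, hence lifts isomorphically to the Clebsch cubic, where no such orbit exists (\autoref{Prop_OrbitsP1xP1}); the conic-bundle exclusion then comes for free, because the only orbits whose blow-ups are del Pezzo surfaces produce the Clebsch cubic, both of whose extremal rays are birational (\autoref{Prop_Contraction5lines}). (Your own tools would in fact suffice, though you do not say so: a $G_{20}$-fixed point on a smooth surface is impossible because $G_{20}$ has no faithful $2$-dimensional representation, so the tangent-space action at a fixed point could not be faithful; and the square of an order-$4$ element is the involution in $D_{10}$, which swaps the two $C_5$-fixed points in each ruling and therefore acts freely on the four $C_5$-fixed points, killing all sub-orbits of length $1$ or $2$.)

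The second gap is general position, which you never address. For the link \eqref{MainTechResult1} to exist one must know that $Q_1,Q_2$ lie on none of the ten lines and on no exceptional conic of $S_5$, i.e.\ that the blow-up really is a del Pezzo surface; this is the paper's \autoref{Lem_SmoothBlowupS5}, and your phrase ``blowing it up yields a smooth cubic'' simply assumes it. Likewise, for the blow-ups of $\mathcal{K}_1$ and $\mathcal{K}_2$ to be cubic surfaces (rather than weak del Pezzo or worse) one needs that no two of the five points share a ruling and no four lie on a plane section in the quadric model; the paper checks this in \autoref{Lem_OrbitsP1xP15}. Without these verifications your ``second extremal ray forced onto $S_5$'' has no starting point. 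Finally, a small but real logical slip: you justify that the order-$4$ elements interchange $Q_1$ and $Q_2$ by the phrase ``which conjugates $\sigma$ to $\sigma^2$''; conjugation only shows the pair $\{Q_1,Q_2\}$ is preserved, not that it is swapped --- the swap (equivalently, the nonexistence of a $G_{20}$-fixed point) needs the tangent-space argument above or the paper's argument in \autoref{Lem_OrbitsDP5} --- and your Lefschetz count should also rule out pointwise-fixed curves before concluding ``exactly two fixed points.''
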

These links were constructed and described numerically by Dolgachev and Iskovskikh in Proposition 7.13 in \cite{Do-Is} but for our purposes we reconstruct them here and will fill in the details for these links in the paper.

\section{Motivation}
In this section we want to motivate \autoref{Main_theorem}. There are various different starting points to investigate conjugacy in the Cremona group. We decided to start our research on del Pezzo surfaces. Those surfaces have been introduced by Pasquale del Pezzo in the late 18th century and since then various ways of studying them have been encountered. \\
For our purposes we will understand a del Pezzo surface of degree $d$, denoted by $S_d$, as the blow up of $\Pro^2$ in $9-d$ points in general position. To start our investigation of conjugacy classes of the Cremona group we need to introduce the notion of \emph{$G$-birational (super-) rigidity}.

\begin{Definition}
	\label{Defn_rigid}
Let $S$ be a smooth del Pezzo surface and $G \subset \Aut(S)$ be a finite group, such that $\Pic^G(S)=\Z$. We say $S$ is \emph{$G$-birationally rigid}
\begin{enumerate}[label=(\arabic*)]
	\item if $S$ is $G$-birational to any $G$-minimal del Pezzo surface $S'$, then $S'$ is $G$-biregular to $S$, and
	\item $S$ is not $G$-birational to any $G$-conic bundle.
\end{enumerate} 
Condition (1) is equivalent to saying that for any  birational $G$-map \\$\chi : S \dashrightarrow S'$, where $S'$ is a $G$-minimal del Pezzo surface, there exist a $G$-birational automorphism $\theta: S \dashrightarrow S$ such that $\chi \circ \theta$ is a $G$-isomorphism.
\end{Definition}

	\autoref{Defn_rigid} means, that the only $G$-Sarkisov links starting in $S$ are of the form 
	\begin{align}
	\tag{$\bigstar$}
	\label{ComDiagram_Grigid}
	\begin{split}
		\xymatrix{
		&\widehat{S}\ar@{->}[ld]_{\alpha}\ar@{->}[dr]^{\beta}\\
		S\ar@{-->}[rr]_{\phi}&&S
	}
	\end{split}
	\end{align}
 where $\alpha$ and $\beta$ are blow ups of $G$-orbits.

\begin{Definition}
	\label{Defn_supperrid}
	Let $S$ be a smooth del Pezzo surface and $G \subset\Aut(S)$ be a finite group, such that $\Pic^G(S)=\Z$. The surface $S$ is \emph{$G$-birationally superrigid}, if it is $G$-birationally rigid and $\text{Bir}^G(S)=\Aut^G(S)$.
\end{Definition}
\noindent 	\autoref{Defn_supperrid} means that there are no $G$-Sarkisov links starting at $S$.\\
	
With these definitions in hand we are able to state.

\begin{Theorem}[\cite{Do-Is}]
	\label{Thm_Orbitlengthrigid}
	Let $S$ be a smooth del Pezzo surface of degree d, that is $K_S^2=d$, and let $G \subseteq \Aut(S)$ such that $\Pic^G(S)=\Z$. Then the following assertion holds.
	\begin{enumerate}
		\item If $S$ does not contain a $G$-orbit of length less then $d$, then $S$ is \\$G$-birationally superrigid.
		\item If $S$ does not contain a $G$-orbit of length less then $d-2$, then $S$ is $G$-birationally rigid.
	\end{enumerate}
\end{Theorem}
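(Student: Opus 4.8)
The plan is to reduce the whole statement to an analysis of which $G$-Sarkisov links can emanate from $S$, using the factorisation theorem of Corti together with elementary bookkeeping of $K^2$ under blow-ups. Since $S$ is a $G$-minimal del Pezzo surface with $\Pic^G(S)=\Z$, the only links that can start at $S$ are those of type \eqref{Sarkisov1} and \eqref{Sarkisov2}: links of type \eqref{Sarkisov3} and \eqref{Sarkisov4} start from a conic bundle, and type \eqref{Sarkisov5} starts from a surface that is not $G$-minimal. In both admissible types the left leg $\alpha\colon \widehat S \to S$ is the blow-up of a $G$-orbit of some length $\ell$ (so $\Pic^G(\widehat S)=\Z^2$, the new invariant class being the total exceptional divisor), and by \autoref{Rem_Glinks} the surface $\widehat S$ is again a del Pezzo surface. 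Hence $K_{\widehat S}^2 = d-\ell \geq 1$, which already forces $\ell \leq d-1$.

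First I would dispose of (1). If $S$ carries no $G$-orbit of length $<d$, then every $G$-orbit has length $\geq d$, so the inequality $\ell \leq d-1$ can never be satisfied and no link of type \eqref{Sarkisov1} or \eqref{Sarkisov2} exists. Consequently there is no $G$-Sarkisov link starting at $S$ at all, and by the characterisation following \autoref{Defn_supperrid} the surface $S$ is $G$-birationally superrigid.

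For (2) the hypothesis becomes $\ell \geq d-2$, which together with $\ell \leq d-1$ pins down $\ell \in \{d-2,\,d-1\}$, so that $\widehat S$ is a del Pezzo surface of degree $K_{\widehat S}^2 = d-\ell \in \{1,2\}$ with $\Pic^G(\widehat S)=\Z^2$. I would then treat the two possible right legs separately. To exclude type \eqref{Sarkisov2}, note that a $G$-conic bundle structure on $\widehat S$ would supply a nonzero $G$-invariant class $f \in \Pic^G(\widehat S)$ with $f^2=0$ and $-K_{\widehat S}\cdot f = 2$. Writing $\Pic^G(\widehat S)=\Z\langle H,E\rangle$ with $H=\alpha^*(-K_S)$, $E$ the total exceptional class, $H^2=d$, $E^2=-\ell$, $H\cdot E=0$ and $-K_{\widehat S}=H-E$, the isotropy condition $f^2=0$ forces $d/\ell$ to be the square of a rational number. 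Since $\gcd(d,\ell)$ divides $2$, this would require two coprime integers differing by $1$ or by $2$ to be simultaneously perfect squares, which is impossible for $d\geq 3$ (and the small cases are checked directly). Hence no link of type \eqref{Sarkisov2} exists, and $S$ is not $G$-birational to any conic bundle.

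It remains to show that every admissible link of type \eqref{Sarkisov1} returns to $S$, i.e. that the target $S'$ is $G$-isomorphic to $S$; this is the crucial step and the main obstacle. Here I would exploit the geometry special to degrees $1$ and $2$: a del Pezzo surface of degree $2$ (respectively $1$) carries the Geiser (respectively Bertini) involution $\tau$, which is central in $\Aut(\widehat S)$ and therefore commutes with the induced $G$-action, so that $\tau \in \Aut(\widehat S)^G$. Since $\tau^{*}$ fixes $K_{\widehat S}$ and acts as $-\mathrm{id}$ on $K_{\widehat S}^{\perp}$, it is a nontrivial involution of the rank-two lattice $\Pic^G(\widehat S)$ that preserves the invariant Mori cone $\overline{NE}^{G}(\widehat S)$ and reflects it across the ray spanned by $-K_{\widehat S}$; such an involution must interchange the two extremal rays of this two-dimensional cone. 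As $\alpha$ and $\beta$ are exactly the contractions of these two rays, $\tau$ conjugates one contraction into the other and hence descends to a $G$-isomorphism $S \xrightarrow{\sim} S'$. Thus every type \eqref{Sarkisov1} link from $S$ has the shape \eqref{ComDiagram_Grigid}, and an induction along the Sarkisov factorisation shows that any $G$-minimal del Pezzo surface $G$-birational to $S$ is $G$-isomorphic to $S$; combined with the exclusion of conic bundles this is precisely $G$-birational rigidity in the sense of \autoref{Defn_rigid}. The delicate points I would verify carefully are the centrality of $\tau$ (hence its $G$-equivariance) and the claim that the induced reflection genuinely swaps, rather than fixes, the two extremal rays.
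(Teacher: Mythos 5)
Your proposal is correct and takes essentially the same route as the paper's sketched proof: part (1) via the degree count $K_{\widehat S}^2 = d-\ell \geq 1$ forcing any link out of $S$ to blow up an orbit of length less than $d$, and part (2) by passing to the degree-$1$ or degree-$2$ del Pezzo surface $\widehat S$ and untwisting with the Bertini, respectively Geiser, involution. Your write-up supplies details that the paper's sketch leaves to \cite{Do-Is} --- namely the lattice computation excluding links of type \eqref{Sarkisov2} and the cone-reflection argument showing that the central involution genuinely swaps the two extremal rays, hence returns the link to $S$ --- but the underlying strategy is identical.
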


\begin{proof}[Sketched Proof]
	Assume $\widehat{S}$ is a smooth del Pezzo surface. Then by \autoref{Rem_Glinks} $\alpha:S \to \widehat{S}$  is a blow up of a $G$-orbit of length less then $d$, because $K_{\widehat{S}}>0$. This proves \emph{(1)}.\\
	If there is a $G$-orbit of length $d-1$, the blow up of this orbit is $\widehat{S}=S_1$, the del Pezzo surface of degree 1, so we can use the Bertini involution there. Similarly if there exists a $G$-orbit of length $d-2$, the del Pezzo surface of degree 2, we can blow up this orbit to obtain $\widehat{S}=S_2$, and we can use the Geiser involution. This prove \emph{(2)}.
\end{proof}

From \autoref{Thm_Orbitlengthrigid} we can immediately deduce the following Corollary.

\begin{Corollary}[\cite{Do-Is}]
	\label{Thm_DP1}
	Let $S$ be a smooth del Pezzo surface of degree $d <3$, and let $G \subseteq \Aut\left(S\right)$ be a finite group such that $\mathrm{Pic}^G(S)=\mathbb{Z}$. If $S$ is of degree $1$, then $S$ is $G$-birationally superrigid. If $S$ is of degree 2 or 3, then $S$ is $G$-birationally rigid.
\end{Corollary}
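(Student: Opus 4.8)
The plan is to derive all three assertions directly from \autoref{Thm_Orbitlengthrigid}, the only additional input being the elementary observation that a $G$-orbit is nonempty and hence has length at least $1$. The strategy is simply a case distinction on the degree $d \in \{1,2,3\}$, in each case checking that the relevant hypothesis of \autoref{Thm_Orbitlengthrigid} holds vacuously.

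First I would treat the degree-$1$ case. Since every $G$-orbit has length at least $1$, the surface $S$ cannot contain a $G$-orbit of length strictly less than $d=1$. Therefore the hypothesis of part (1) of \autoref{Thm_Orbitlengthrigid} is satisfied, and we conclude that $S$ is $G$-birationally superrigid.

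Next, for the degree-$2$ case, the relevant threshold in part (2) is $d-2 = 0$: there can be no $G$-orbit of length less than $0$, again because orbits are nonempty. Hence part (2) applies and $S$ is $G$-birationally rigid. The degree-$3$ case is analogous: here $d-2 = 1$, and since every orbit has length at least $1$ there is no $G$-orbit of length strictly less than $1$; part (2) of \autoref{Thm_Orbitlengthrigid} then yields $G$-birational rigidity as well.

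Since the argument reduces to substituting the small values of $d$ into \autoref{Thm_Orbitlengthrigid} and invoking nonemptiness of orbits, there is no substantial obstacle. The only point requiring care is the bookkeeping of the strict inequalities, namely keeping track that the conditions "no orbit of length less than $d$" and "no orbit of length less than $d-2$" become vacuous for these small degrees.
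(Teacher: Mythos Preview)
Your proposal is correct and matches the paper's approach: the paper states that the corollary follows immediately from \autoref{Thm_Orbitlengthrigid}, and your case-by-case verification that the orbit-length hypotheses are vacuous for $d\in\{1,2,3\}$ is precisely the intended deduction.
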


This result is known for quite some time and was implicitly proven by Segre in 1943 and Manin in 1962. For proofs of \autoref{Thm_Orbitlengthrigid} and \autoref{Thm_DP1}  see section 7.1 in \cite{Do-Is}. The proof of \autoref{Thm_Orbitlengthrigid} easily implies

\begin{Theorem}[\cite{Do-Is}]
	\label{Thm_fixedpoints}
	Let $S$ be a smooth del Pezzo surface of degree $4$, and let $G \subset \Aut\left(S\right)$ be a finite group such that $\mathrm{Pic}^G(S)=\mathbb{Z}$. Then 
	\begin{enumerate}[label=\arabic*)]
		\item if there are no $G$-fixed points, then $S$ is $G$-birational rigid,
		\item if there exists a $G$-fixed point, then there exists a $G$-Sarkisov link
		\begin{align*}
		\begin{split}
			\xymatrix{
			&\widetilde{S}\ar@{->}[ld]_{\alpha}\ar@{->}[dr]^{\beta}\\
			S_4\ar@{-->}[rr]_{}&&\Pro^1
		}
		\end{split}
		\end{align*}
		where $\alpha$ is the blow up of a $G$-orbit, $\widetilde{S}$ is a smooth cubic surface and $\beta$ is a conic bundle. 
	\end{enumerate} 
\end{Theorem}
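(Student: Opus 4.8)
The plan is to deduce both parts from \autoref{Thm_Orbitlengthrigid}, supplemented by one explicit construction of a link. For part 1) I would simply observe that a $G$-fixed point is nothing but a $G$-orbit of length $1$. Since $d=4$, the hypothesis of \autoref{Thm_Orbitlengthrigid}(2) --- no $G$-orbit of length less than $d-2=2$ --- is exactly the statement that $S$ carries no $G$-orbit of length $1$, i.e. no $G$-fixed point. Hence the absence of $G$-fixed points lets \autoref{Thm_Orbitlengthrigid}(2) apply directly, giving that $S$ is $G$-birationally rigid.

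For part 2) I would blow up the $G$-fixed point $p$. Writing $\alpha\colon \widetilde S \to S$ for this blow-up, we have $K_{\widetilde S}^{2} = K_S^{2} - 1 = 3$, so $\widetilde S$ is a (weak) del Pezzo surface of degree $3$. First I would check that $-K_{\widetilde S}=\alpha^{*}(-K_S)-E$ is ample, equivalently that $p$ lies on none of the sixteen lines of $S$; granting this, $\widetilde S$ is a smooth cubic surface. The exceptional curve $E$ is a $G$-invariant $(-1)$-curve, i.e. a $G$-invariant line on the cubic, and since $S$ is $G$-minimal one gets $\Pic^{G}(\widetilde S)=\Z\langle\alpha^{*}(-K_S)\rangle\oplus\Z\langle E\rangle\cong\Z^{2}$, as required for a $G$-conic bundle in the sense of \autoref{Defn_GMori}.

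Next I would exhibit $\beta$ as the projection from the line $E$. Concretely, consider the $G$-invariant class
\[
F \;=\; -K_{\widetilde S}-E \;=\; \alpha^{*}(-K_S)-2E ,
\]
which satisfies $F^{2}=0$ and $F\cdot(-K_{\widetilde S})=2$. On $S$ this is the system of anticanonical curves acquiring a double point at $p$; since $h^{0}(-K_S)=5$ and a double point imposes three conditions, $|F|$ is a base-point-free pencil whose general member is a smooth rational curve (a conic with respect to $-K_{\widetilde S}$). This pencil defines a $G$-equivariant morphism $\beta\colon\widetilde S\to\Pro^1$ that is a $G$-conic bundle, and together with $\alpha$ it realises a $G$-Sarkisov link of type \eqref{Sarkisov2}, which is the assertion.

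The hard part will be justifying that $\widetilde S$ is genuinely a smooth cubic surface rather than merely a weak del Pezzo surface: one must rule out the possibility that the $G$-fixed point $p$ lies on a line of $S$. Here the minimality $\Pic^{G}(S)=\Z$ already forbids a single $G$-invariant line, so the only dangerous configuration is a $G$-orbit of two lines meeting at $p$; excluding this, and thereby controlling the position of $p$ within the $W(D_5)$-configuration of the sixteen lines, is the step I expect to require genuine work. Once $E$ is known to be an honest line on a smooth cubic, the base-point-freeness of $|F|$ and the smoothness of its general fibre are standard.
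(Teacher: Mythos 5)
Your overall strategy is correct and is, in effect, exactly what the paper intends: the paper gives no proof of \autoref{Thm_fixedpoints} at all (it is quoted from \cite{Do-Is}, prefaced only by the remark that the proof of \autoref{Thm_Orbitlengthrigid} ``easily implies'' it), and your part 1) is precisely that deduction --- for $d=4$ the hypothesis of \autoref{Thm_Orbitlengthrigid}(2), ``no $G$-orbit of length less than $d-2=2$'', is verbatim the absence of $G$-fixed points. Your part 2) supplies the standard construction the paper leaves implicit (blow up the fixed point, then project from the resulting line), and your verifications are sound: $F=-K_{\widetilde S}-E=\alpha^*(-K_S)-2E$ has $F^2=0$, $F\cdot(-K_{\widetilde S})=2$, the system $|F|$ is $G$-invariant, and $\Pic^G(\widetilde S)=\alpha^*\Pic^G(S)\oplus\Z E\cong\Z^2$, so $\beta$ is a $G$-conic bundle and the diagram is a link of type \eqref{Sarkisov2}.

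The step you flagged as requiring ``genuine work'' is in fact a one-line continuation of the minimality argument you already used for a single invariant line, so there is no real gap. Suppose the fixed point $p$ lies on some line of $S$. All lines through $p$ pairwise meet (namely at $p$), and two distinct lines of the anticanonical embedding $S\subset\Pro^4$ meet transversally in at most one point; hence the $G$-orbit of such a line consists of lines $L_1,\dots,L_k$ through $p$ with $L_i\cdot L_j=1$ for all $i\neq j$. The class $D=L_1+\cdots+L_k$ is $G$-invariant, with $D^2=k(k-2)$ and $D\cdot K_S=-k$. Since $\Pic^G(S)=\Z$, the classes $D$ and $K_S$ must be proportional in $\Pic(S)\otimes\Q$, which forces $(D\cdot K_S)^2=D^2\cdot K_S^2$, i.e.\ $k^2=4k(k-2)$, i.e.\ $k=8/3$, a contradiction. (In your ``dangerous'' case $k=2$ this reads: $(L_1+L_2)^2=0$ while $(L_1+L_2)\cdot K_S=-2\neq0$, so $L_1+L_2$ cannot be proportional to $K_S$, whose self-intersection is $4$.) Hence $p$ lies on no line, $\widetilde S$ is an honest smooth cubic surface, and the rest of your argument goes through; in particular no finer control of the position of $p$ inside the configuration of the sixteen lines is needed.
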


In this paper we are mostly interested in $G$-birational rigid del Pezzo surfaces or those which are \emph{close} to them. By \emph{close} we mean that these are del Pezzo surfaces which are not $G$-birational to any conic bundle (in the language of \cite{ahmadinezhad2015birationally} these are $G$-solid del Pezzo surfaces).\\

Following from \autoref{Thm_DP1} and \autoref{Thm_fixedpoints} we will investigate links starting from the smooth del Pezzo surface of degree 5, which we will call $S_5$, in this paper.
It is well known that 
$$\Aut(S_5) \cong \mathfrak{S}_5,$$
 the symmetric group of 5 elements. A proof is provided in  \cite{Blanc_thesis} . If we want $S_5$ to be a $G$-minimal surface (i.e. $\mathrm{Pic}^G(S_5)=\mathbb{Z}$), we require $G$ to be one of the following (see Theorem 6.4 in \cite{Do-Is}):
\begin{itemize}
 \setlength\itemsep{0.5em}
	\item the symmetric group $\mathfrak{S}_5$ of 5 elements of order 120;
	\item the alternating group $\mathfrak{A}_5$ of 5 elements of order 60;
	\item the semidirect product $G_{20} \cong C_5 \rtimes C_4$ of order 20;
	\item the dihedral group $D_{10}$ of order 10;
	\item the cyclic group $C_5$ of order 5.
\end{itemize}
\vspace{1mm} 
For $\mathfrak{S}_5$ and $\mathfrak{A}_5$ the quintic del Pezzo surface is $G$-birationally superrigid (see \cite{Ban-Tok}). For $C_5$ there exist a $G$-birational map from $S_5$ to $\mathbb{P}^2$ (see \cite{Bau-Bla}) such that $C_5$ has a fixed point there (see \autoref{Lem_C5P2}). The construction of this map can be generalised for $D_{10}$ which is done in \autoref{Prop_D10DP5}. Hence these groups are better adressed when studying the $G$-equivariant birational geometry of $\Pro^2$. This has been done in \cite{Sako}.
We shall also notice that $S_5$ is not $G$-solid in this case.\\
In this paper we will therefore focus on the group $G_{20} \cong C_5 \rtimes C_4$ as a subgroup of $\Aut(S_5) \cong \mathfrak{S}_5$, which is also known as as the general affine group of degree one over the field with five elements, denoted by $\mathrm{GA}(1,5)$.
%The proof of \autoref{Main_theorem} is based on the powerful result of the \emph{Noether - Fano Inequality}(see \cite{Isk-NFI} for a good reference). It implies that the only $G$-Sarkisov links starting at a del Pezzo surface of degree $d$ are given by blow ups of $G$-orbits of \emph{small} length less than $d$. Using this condition basic group theory will allow us to describe which orbits indeed exist and how possible links can be constructed.   

\section{The Quintic del Pezzo surface}
\label{Sec_DP5}

In the proof of \autoref{MaintechnicalResult} we will investigate the existence of $G_{20}$-equivariant birational maps between quintic del Pezzo surface, denoted \\by $S_5$ and the surface $\mathbb{P}^1 \times \mathbb{P}^1$.  First we need to understand the action of $G_{20}$ on $S_5$. To do this we use the result from \cite{Bau-Bla}.

\begin{Lemma}[\cite{Bau-Bla}]
	\label{Lem_C5P2}
	There is a $C_5$-birational map $\phi$ (i.e. a $C_5$-Sarkisov link) between $S_5$ and $\Pro^2$  given by the $C_5$-commutative diagram
	\begin{align}
		\tag{$\blacktriangle$}
		\label{Fig_DP5P2}
	\begin{split}
	\xymatrix{
		&S_4\ar@{->}[ld]_{\alpha}\ar@{->}[dr]^{\beta}\\
		S_5 \ar@{-->}[rr]_{\phi}&&\mathbb{P}^2 
	}		
	\end{split}
	\end{align}
	Here $\alpha$ is the blow up of a $C_5$-fixed point in $S_5$, and $\beta$ is the blow up of $5$ points in $\Pro^2$ which form a $C_5$-orbit. $S_4$ is a quartic del Pezzo surface.
\end{Lemma}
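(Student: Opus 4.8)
The plan is to realise $\phi$ as a $C_5$-Sarkisov link of type \eqref{Sarkisov1}: first blow up a $C_5$-fixed point of $S_5$ to reach a quartic del Pezzo surface $S_4$ carrying a lifted $C_5$-action, and then contract a $C_5$-orbit of five pairwise disjoint $(-1)$-curves on $S_4$ to reach a smooth surface of Picard rank $1$ and anticanonical degree $9$, which must be $\Pro^2$. The two extremal contractions $\alpha$ and $\beta$ are then both blow ups of $C_5$-orbits (of lengths $1$ and $5$), exactly as in \eqref{Fig_DP5P2}.

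First I would produce the fixed point. Using the classical identification $S_5\cong \overline{M}_{0,5}$, on which $\Aut(S_5)\cong\mathfrak{S}_5$ acts by permuting the five marked points, a $C_5$-fixed point is a configuration of five points on $\Pro^1$ that is invariant, as a configuration, under the chosen $5$-cycle $\sigma$. Such a configuration is forced to be an orbit of an order-$5$ element of $\mathrm{PGL}_2(\C)$, i.e. the five fifth roots of unity up to projectivity; in particular a fixed point $p$ exists and lies in the interior of $\overline{M}_{0,5}$, hence on none of the ten $(-1)$-curves of $S_5$. (Alternatively one counts fixed points topologically: $\chi(S_5)=7\equiv 2 \pmod 5$ forces $C_5$ to have fixed points, and a short analysis shows the fixed locus is $0$-dimensional.) Blowing up $p$ gives $\alpha\colon S_4\to S_5$ with $S_4$ a del Pezzo surface of degree $4$ (see \autoref{Rem_Glinks}); since $p$ is fixed the $C_5$-action lifts to $S_4$ and the exceptional curve $E$ is $C_5$-invariant.

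The crux is to exhibit, among the sixteen $(-1)$-curves of $S_4$, a $C_5$-orbit of five pairwise disjoint ones. For this I would use the five conic-bundle structures on $S_5$, i.e. the five classes with $C^2=0$ and $-K_{S_5}\cdot C=2$; in the model $S_5=\mathrm{Bl}_{4}\,\Pro^2$ these are $H-E_i$ $(1\le i\le 4)$ and $2H-E_1-E_2-E_3-E_4$. They correspond to the five forgetful maps $\overline{M}_{0,5}\to\overline{M}_{0,4}$, so $\mathfrak{S}_5$ permutes them via the standard action on five letters and $\sigma$ permutes them in a single $5$-cycle. In each such pencil let $C_p$ be the unique member through $p$; since $p$ avoids the ten lines it lies on no reducible fibre of any of these conic bundles, so each $C_p$ is a smooth conic, and its strict transform $\widetilde{C}_p=\alpha^{*}C_p-E$ satisfies $\widetilde{C}_p^{\,2}=-1$ and $-K_{S_4}\cdot\widetilde{C}_p=1$, hence is a $(-1)$-curve. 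These five curves form a $C_5$-orbit because $\sigma$ cyclically permutes the five pencils and fixes $p$. Any two of the five conic classes meet in exactly one point (a direct intersection-number check, e.g. $(H-E_1)\cdot(H-E_2)=1$), and that point is necessarily $p$; hence after blowing up $p$ the strict transforms become pairwise disjoint.

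Finally I would contract this orbit, $\beta\colon S_4\to S'$. Contracting five disjoint $(-1)$-curves lowers the Picard rank from $6$ to $1$ and raises the degree to $K_{S_4}^2+5=9$, so $S'\cong\Pro^2$ and the five contracted curves map to a $C_5$-orbit of five points. Since $S_5$ and $\Pro^2$ are both $C_5$-minimal del Pezzo surfaces—the $5$-cycle acts on $\Pic(S_5)\otimes\Q$ without nonzero invariants in the $4$-dimensional standard summand, so $\Pic^{C_5}(S_5)=\Z$—the composition $\phi=\beta\circ\alpha^{-1}$ is a genuine $C_5$-Sarkisov link of type \eqref{Sarkisov1}, which is the assertion. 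The main obstacle is the identification of the disjoint $(-1)$-orbit on $S_4$; once one recognises it as the orbit of conics through the fixed point, both the disjointness and the $C_5$-equivariance follow from the elementary fact that distinct conic classes meet only at $p$.
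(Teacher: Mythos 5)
Your argument is correct, but it builds the link in the opposite direction from the paper and rests on different ingredients. The paper starts on $\Pro^2$: it induces the $C_5$-action from $\Pro^1$ via a $C_5$-equivariant Veronese embedding, so that there is an invariant conic $K$; it blows up a length-$5$ orbit lying on $K$, contracts the strict transform of $K$ (now an invariant $(-1)$-curve) to land on $S_5$, and then invokes the uniqueness of $C_5\subset\mathfrak{S}_5\cong\Aut(S_5)$ up to conjugation to identify the constructed action with the given one. You instead start on $S_5$ itself: you produce a fixed point $p$ via the $\overline{M}_{0,5}$ model, blow it up, and recognise the strict transforms of the five conic-bundle fibres through $p$ as a $C_5$-orbit of pairwise disjoint $(-1)$-curves, whose contraction is forced to be $\Pro^2$ by Picard rank and degree. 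Your route works with the given action directly (no conjugation step is needed), and it makes the Sarkisov-link conditions explicit — in particular you actually verify $\Pic^{C_5}(S_5)=\Z$, which the paper's proof never addresses. What the paper's route buys is the explicit fixed-point structure: it exhibits the two fixed points $Q_1$ and $Q_2$ (as images of the contracted conic and of the fixed point $B\notin K$) and notes that all other $C_5$-orbits have length $5$; these by-products are what \autoref{Prop_D10DP5} and \autoref{Lem_OrbitsDP5} rely on later, and your argument does not recover them (though the lemma as stated does not require them). One small slip: citing \autoref{Rem_Glinks} for ``$S_4$ is a del Pezzo surface'' is circular, since that remark presupposes an already-established $G$-link; this is harmless, however, because you verify the real hypothesis — that $p$ lies on none of the ten $(-1)$-curves — and for degree $5$ that is exactly the condition for the blow up to be a del Pezzo surface of degree $4$.
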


\begin{proof}
	For the proof we will start with $\Pro^2$ and invert the link \eqref{Fig_DP5P2}. Consider $C_5$ as a subgroup of $\Aut\left(\Pro^1\right)\cong \text{PGL}_2(C)$. There exist a $C_5$-equivariant Veronese embedding $\Pro^1 \hookrightarrow \Pro^2$ which defines a faithful action of $C_5$ on $\Pro^2$ such that there exists a $C_5$-invariant conic $K \subseteq \Pro^2$ (that is the image of $\Pro^1$). Thus we can blow up the $C_5$-orbit of length 5 on this conic to obtain the quartic del Pezzo surface, denoted by $S_4$.\\
	 If we contract the the proper transform of $K$ there we get the unique quintic del Pezzo surface. Since $C_5 \subseteq \mathfrak{S} \cong \Aut(S_5)$ is unique up to conjugation the composition of the two described maps yields the desired link $\phi$.\\[0.1cm]
	
	In more elementary terms we may say that five points $P_1,...,P_5$ in general position in $\mathbb{P}^2$ always lie on a unique conic $K$. Then the group $C_5$ fixes two points $A_1$, $A_2$ on a conic \cite{Blanc_thesis} (i.e. the line through these two points is $C_5$-invariant). Additionally it fixes a point $B \in \Pro^2$ which does not lie on the conic. The blow up $\alpha$ of $P_1, ..., P_5$ does not affect $B$, neither does the contraction $\beta$. Thus there is a point 
	$$Q_2 = \phi^{-1}\left(B\right) \in S_5,$$
	 which is fixed by $C_5$. We know that $\alpha^{-1}\left(K\right)$ is a $\beta$-exceptional curve in $S_4$. After the contraction $\beta$, we have $$\phi^{-1}\left(A_1\right)=\phi^{-1}\left(A_2\right)=Q_1,$$
	  which is another fixed point of $C_5$ in $S_5$. Thus we know that for \\ $C_5 \subseteq \Aut(S_5)$ there exist two $C_5$ fixed points $Q_1$ and $Q_2$. We shall mention that all other orbits are of length 5.
\end{proof}

%\begin{Remark}
%	The picture described in the proof of \autoref{Lem_C5P2} is visualised in \autoref{Fig_P1inP2}.
%\end{Remark}

%\begin{figure}[htb]
%	\centering
%	\begin{tikzpicture}[scale=1.5]
%	\draw [dashed] (-1.8,-1.8) -- (1.8,-1.8) -- (1.8,1.8) -- (-1.8,1.8) -- (-1.8,-1.8);
%	\draw[thick] (0,0) circle (1cm);
%	\node at (0,-1)[below]{\Large$\Pro^1$};
%	\filldraw (-1,0) circle (2pt);
%	\node at (-1,0)[left]{$P_1$};
%	\node at (3/5,-4/5)[below right]{$P_2$};
%	\filldraw (1,0) circle (2pt);
%	\node at (1,0)[right]{$P_3$};
%	\filldraw (3/5,4/5) circle (2pt);
%	\node at (3/5,4/5)[above right]{$P_4$};
%	\filldraw (-4/5,3/5) circle (2pt);
%	\node at (-4/5,3/5)[above left]{$P_5$};
%	\filldraw[red] (0,1) circle (2pt);
%	\node at (0,1)[above left]{$A_1$};
%	\filldraw[red] (-3/5,-4/5) circle (2pt);
%	\node at (-3/5,-4/5)[left]{$A_2$};
%	\filldraw[blue] (1.3,-1.3) circle(2pt);
%	\node at (1.3,-1.3) [below right] {$B$};
%	\draw (-1.5,-1.5)[thick,red] (-4/5,-7/5) -- (1/5,8/5);
%	\node at (-1.8,-1)[above left]{\Large $\Pro^2$};
%	\end{tikzpicture}
%	\caption{$C_5$-action on $P_2$ leaving the conic $K$ invariant and thus fixing the line through $A_1$ and $A_2$}.
%	\label{Fig_P1inP2}
%\end{figure}

From the proof of \autoref{Lem_C5P2} we can easily deduce.

\begin{LemCorollary}
	\label{Prop_D10DP5}
	There is a $D_{10}$-birational map $\phi$ (i.e. a $D_{10}$-Sarkisov link) between $S_5$ and $\Pro^2$  corresponding to the $D_{10}$-commutative diagram \eqref{Fig_DP5P2}. 
\end{LemCorollary}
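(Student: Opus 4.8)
The plan is to run the construction in the proof of \autoref{Lem_C5P2} verbatim, but starting from the full dihedral group rather than its cyclic rotation subgroup. Concretely, I would first realise $D_{10} \subseteq \Aut(\Pro^1) \cong \mathrm{PGL}_2(\C)$ as the group generated by the order-$5$ rotation $z \mapsto \zeta z$ (whose powers give the $C_5$ used in \autoref{Lem_C5P2}) together with the involution $z \mapsto 1/z$ that inverts it. The Veronese embedding $\Pro^1 \hookrightarrow \Pro^2$ is equivariant for the entire $\mathrm{PGL}_2(\C)$-action, since it is induced by $\mathrm{Sym}^2$ of the standard representation; hence it is automatically $D_{10}$-equivariant, and the conic $K \subseteq \Pro^2$ (the image of $\Pro^1$) is $D_{10}$-invariant.

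The next step is to produce a $D_{10}$-orbit of length $5$ on $K$ to play the role of the five points blown up by $\beta$. Since $\lvert D_{10}\rvert = 10$, such an orbit corresponds to a point whose stabiliser is one of the five reflections and which is fixed by no nontrivial rotation; the five fifth roots of unity $\{1,\zeta,\zeta^2,\zeta^3,\zeta^4\} \subseteq \Pro^1$ form exactly such an orbit, the stabiliser of $1$ being $\langle z \mapsto 1/z\rangle$. These five points lie on the conic $K$, so no three are collinear and they are in general position; blowing them up yields a smooth quartic del Pezzo surface $S_4$ carrying a $D_{10}$-action, and this blow-up is the map $\beta$.

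Finally, I would contract the proper transform $\widetilde{K}$ of $K$ in $S_4$. Because $K$ is $D_{10}$-invariant and the five blown-up points lie on it, $\widetilde{K}$ is a $D_{10}$-invariant curve of self-intersection $K^2 - 5 = 4 - 5 = -1$, so its contraction $\alpha$ is $D_{10}$-equivariant and its image is a $D_{10}$-fixed point. The resulting quintic del Pezzo surface carries a $D_{10}$-action, and since $D_{10} \subseteq \mathfrak{S}_5 \cong \Aut(S_5)$ is unique up to conjugacy it is $D_{10}$-isomorphic to $S_5$; the composite $\phi = \beta \circ \alpha^{-1}$ is then the required $D_{10}$-Sarkisov link realising \eqref{Fig_DP5P2}. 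The only point demanding genuine verification is the existence of the length-$5$ orbit together with the general position of its five points, which guarantees that $S_4$ is a smooth del Pezzo surface and that $\widetilde{K}$ is a bona fide $(-1)$-curve; every remaining assertion is inherited from \autoref{Lem_C5P2} precisely because all maps there are equivariant for the larger group $D_{10} \supseteq C_5$.
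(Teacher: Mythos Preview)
Your proposal is correct and follows essentially the same approach as the paper: lift the $D_{10}$-action on $\Pro^1$ to $\Pro^2$ via the Veronese embedding and rerun the construction of \autoref{Lem_C5P2} equivariantly for the larger group. Your version is in fact more explicit than the paper's, which simply asserts that ``the same argument'' works; in particular you supply the concrete length-$5$ orbit $\{1,\zeta,\ldots,\zeta^4\}$ invariant under the extra involution, a detail the paper leaves implicit.
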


\begin{proof}
	In the same way as in the proof of \autoref{Lem_C5P2} we can construct the inverse link of \eqref{Fig_DP5P2}. Furthermore the action of $D_{10}\subseteq \Aut\left(\Pro^1\right)$ lifts to an action on $\Pro^2$. Then we can use the same argument as before.\\[1mm]
	In the notation of the proof of \autoref{Lem_C5P2} we may say that the action of $D_{10}$ on $\Pro^2$ interchanges the points $A_1$ ad $A_2$ but fixes the point $B$. Thus we can use the same link $\phi$ as in \autoref{Lem_C5P2} and by the same argument as above $D_{10}$ fixes points $Q_1$ and $Q_2$ in $S_5$.
\end{proof}

 We are now in the position to investigate orbits of small length $r < 5$ of the $G_{20}$-action on $S_5$.\\
	We want to proceed in a similar way as in \cite{Isko} which means that we need to classify all $G$-orbits of length $r < \deg(S_5) = \text{K}_{S_5}^2=5$. Then we will concentrate on those orbits of which the points are in general position, because this is a necessary condition for the existence of links starting from the surface $S_5$.

\begin{Remark}
	 We say that points of an orbit are in general position if the blow up of $S_5$ in this orbit is a del Pezzo surface again. 
\end{Remark}

\begin{Lemma}
	\label{Lem_OrbitsDP5}
	There is a unique $G_{20}$-orbit of length $r <5$  on $S_5$. It is the orbit of length $r=2$ consisting of the points $Q_1$ and $Q_2$.
\end{Lemma}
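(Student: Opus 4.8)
The plan is to exploit that $C_5$ is the (unique, hence normal) Sylow $5$-subgroup of $G_{20}\cong C_5\rtimes C_4$, so that the $G_{20}$-orbit structure is controlled by the much simpler $C_5$-orbit structure recorded in the proof of \autoref{Lem_C5P2}. First I would observe that, because $C_5$ is normal, $G_{20}$ permutes the set of $C_5$-orbits, and the $C_5$-orbits lying in a fixed $G_{20}$-orbit are mutually conjugate, hence all of the same length. Since $5$ is prime, every $C_5$-orbit on $S_5$ has length $1$ or $5$, and by \autoref{Lem_C5P2} the length-$1$ orbits are exactly the two $C_5$-fixed points $Q_1,Q_2$. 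Consequently a $G_{20}$-orbit of length $r<5$ cannot contain any $C_5$-orbit of length $5$, so it consists entirely of $C_5$-fixed points; that is, it is contained in $\{Q_1,Q_2\}$. Normality of $C_5$ also shows directly that $G_{20}$ preserves $\{Q_1,Q_2\}$ as a set, since for $g\in G_{20}$ the point $g(Q_i)$ is fixed by $gC_5g^{-1}=C_5$.

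At this point the only possible $G_{20}$-orbits of length $<5$ are the singletons $\{Q_1\}$, $\{Q_2\}$ and the pair $\{Q_1,Q_2\}$, and the whole statement reduces to showing that $G_{20}$ has no fixed point among $Q_1,Q_2$ — equivalently that the $C_4=G_{20}/C_5$ action interchanges them. The key step I would use is representation-theoretic. If $Q_i$ were $G_{20}$-fixed, then $G_{20}$ would act on the tangent plane $T_{Q_i}S_5\cong\C^2$, and this action is \emph{faithful}: a finite-order automorphism fixing a smooth point is linearizable there, so an element acting trivially on $T_{Q_i}S_5$ would act trivially in a neighbourhood and hence on all of the irreducible surface $S_5$, contradicting faithfulness of $G_{20}\subset\Aut(S_5)$. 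But $G_{20}\cong C_5\rtimes C_4$ admits no faithful $2$-dimensional complex representation: its abelianization is $C_4$, so every $1$-dimensional character is trivial on $C_5$, while its irreducible characters are four linear ones together with a single $4$-dimensional one ($4\cdot 1^2+4^2=20$). Any $2$-dimensional representation is therefore a sum of two linear characters, none of which is faithful on $C_5$. This contradiction shows that neither $Q_1$ nor $Q_2$ is $G_{20}$-fixed, so $\{Q_1,Q_2\}$ is a single $G_{20}$-orbit of length $2$; by the first paragraph it is the unique orbit of length $<5$.

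The heart of the argument — and the only genuinely non-formal step — is ruling out a $G_{20}$-fixed point, i.e.\ distinguishing a true length-$2$ orbit from two length-$1$ orbits; everything else is bookkeeping with the normal subgroup $C_5$. I prefer the representation-theoretic obstruction above because it needs no explicit coordinates, but a concrete alternative is available: writing $\sigma$ for a generator of $C_5$ and $\tau$ for one of $C_4$, one has $\tau\sigma\tau^{-1}=\sigma^2$, so if $\tau$ fixed $Q_i$ the two tangent weights of $\sigma$ at $Q_i$ would have to be invariant under doubling modulo $5$. Reading off these weights from the model used in the proof of \autoref{Lem_C5P2} — at $Q_2=\phi^{-1}(B)$ they are inherited from the $C_5$-fixed point $B$ off the conic and equal $\{1,4\}$ — one checks $\{1,4\}\neq\{2,3\}$, so $\tau$ fixes neither point and must swap them. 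Either route closes the proof.
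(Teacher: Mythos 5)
Your proposal is correct, and while it shares the paper's overall skeleton---use the normal subgroup structure to confine any short orbit to the two $C_5$-fixed points $Q_1,Q_2$ of \autoref{Lem_C5P2}, then rule out $G_{20}$-fixed points---it replaces the paper's key step by a genuinely different argument. Your reduction (every $C_5$-orbit has length $1$ or $5$, so any $G_{20}$-orbit of length $<5$ consists of $C_5$-fixed points) disposes of lengths $3$ and $4$ in one stroke, where the paper argues case by case (divisibility for $r=3$; for $r=4$, that a stabilizer containing $C_5$ must in fact contain $D_{10}$). The essential divergence is at $r=1$: the paper promotes the link \eqref{Fig_DP5P2} to a $G_{20}$-equivariant one, so that a fixed point would make $G_{20}$ act on $\Pro^2$ preserving the conic $K$ and hence act faithfully on $K\cong\Pro^1$, contradicting the classification of finite subgroups of $\mathrm{PGL}_2(\C)$; you instead linearize at the putative fixed point and invoke the fact that $G_{20}\cong\mathrm{GA}(1,5)$ admits no faithful two-dimensional representation. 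Your route is local and self-contained (no equivariant link, no classification of finite subgroups of $\mathrm{PGL}_2(\C)$), and it exhibits concretely why the image of $C_4$ must swap $Q_1$ and $Q_2$; the paper's route stays inside the birational formalism it has already set up and needs only classical facts about $\Pro^1$.

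One caveat: your justification of the character-theoretic fact is incomplete as stated. The identity $4\cdot 1^2+4^2=20$ is only a consistency check; it does not by itself exclude four two-dimensional irreducibles, which is exactly what occurs for the \emph{other} nonabelian $C_5\rtimes C_4$ (where $C_4$ acts through inversion), and that group \emph{does} admit a faithful two-dimensional representation. So the claim genuinely depends on the relation $\tau\sigma\tau^{-1}=\sigma^{2}$, via the count of conjugacy classes (five) or via Clifford theory. Your weight-doubling argument in the last paragraph supplies precisely this missing ingredient, and in fact needs no explicit model: at a $G_{20}$-fixed point the multiset of tangent weights of $\sigma$ would have to be stable under doubling modulo $5$, and since the nonzero residues form a single orbit under doubling, both weights would vanish, forcing $d\sigma=\mathrm{id}$ and hence $\sigma=\mathrm{id}$ by your own linearization remark---a contradiction. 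I would therefore make the weight-doubling version the proof and relegate the character table to a cross-check; with that adjustment the argument is complete.
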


\begin{proof}
	Let us consider all possible lengths for orbits.
	\begin{itemize}
		\item [$r=1$:] Such an orbit does not exist. Assume it does. By \autoref{Lem_C5P2} and \autoref{Prop_D10DP5} this point can only be $Q_1$, because if all of $G_{20}$ fixes it, the normal subgroups $C_5$ and $D_{10}$ fix it in particular.
		Hence the link \eqref{Fig_DP5P2} yields $G_{20}$-equivariant link from $S_5$ to $\Pro^2$. This means that $G_{20}$ acts on $\Pro^2$ and preserves the conic $K$. This implies that $G_{20}$ acts faithfully on $K\cong \Pro^1$, but this is clearly a contradiction. Hence no orbit of length $r=1$ exists.  
		
		\item [$r=2$:] $\left\{ Q_1,Q_2\right\}$ is such an orbit.	We know that $G_{20}$ has $D_{10}$ as a normal subgroup. If we consider the action of $D_{10}$ on $S_5$, then \autoref{Prop_D10DP5} tells us that there is indeed a unique orbit of length 2 which is the orbit $\left\{Q_1,Q_2 \right\}$. 
		
		\item [$r=3$:] Such an orbit does not exist because $3\nmid 20 = |G_{20}|$, which is required by the orbit-stabilizer theorem.
		
		\item [$r=4$:] Such an orbit does not exist. If there was such an orbit the stabilizer would satisfy $\operatorname{Stab}_G=C_5$ but we know that $C_5$ actually fixes the same points as $D_{10}$ by \autoref{Prop_D10DP5} and hence the stabilizer would actually be $D_{10}$ which can not give an orbit of length 4.
	\end{itemize}
This proves \autoref{Lem_OrbitsDP5}.
\end{proof}

\autoref{Lem_OrbitsDP5} implies that the only possible $G_{20}$-Sarkisov link starting from $S_5$ consists of a blow up of the described orbit of length $r=2$.
\begin{Lemma}
	\label{Lem_SmoothBlowupS5}
	The blow up of $Q_1$ and $Q_2$ in $S_5$ yields a smooth del Pezzo surface $\widetilde{S}$.
\end{Lemma}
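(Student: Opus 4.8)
The plan is to show that blowing up the two-point orbit $\{Q_1,Q_2\}$ produces a surface $\widetilde S$ whose anticanonical class $-K_{\widetilde S}$ is ample, since by \autoref{Rem_Glinks} it suffices to verify that the del Pezzo property is preserved. Because $S_5$ is a smooth del Pezzo surface realized as $\Pro^2$ blown up at $4$ points in general position, blowing up two further points $Q_1,Q_2$ yields a surface that is the blow up of $\Pro^2$ at $6$ points, which is a del Pezzo surface of degree $3$ precisely when those six points are in general position. So the task reduces to a general-position check, and the smoothness is automatic since we blow up distinct smooth points of a smooth surface.

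First I would recall the standard numerical criterion: the blow up of a degree-$5$ del Pezzo surface at a reduced length-$2$ subscheme is a (smooth) cubic surface, hence a del Pezzo surface of degree $3$, if and only if (i) $Q_1$ and $Q_2$ are distinct points, (ii) neither point lies on a $(-1)$-curve of $S_5$ in a way that would create a $(-2)$-curve, and (iii) the two points are not infinitely near and do not lie on a common line (in the $\Pro^2$ model). Concretely, the obstructions to $\widetilde S$ being del Pezzo are the appearance of an effective class $C$ with $C^2\le -2$ and $-K_{\widetilde S}\cdot C\le 0$; on a blow up of $\Pro^2$ at six points these are exactly the classes coming from three collinear points, six points on a conic, or a point blown up infinitely near another. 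I would therefore enumerate which of these configurations could be forced by $Q_1,Q_2$ together with the four points defining $S_5$, and rule each one out.

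The key input here is the description of $Q_1$ and $Q_2$ from \autoref{Lem_C5P2}: the point $Q_2=\phi^{-1}(B)$ corresponds to a $C_5$-fixed point $B$ of $\Pro^2$ not lying on the conic $K$, while $Q_1$ corresponds (after the contraction $\beta$) to the two fixed points $A_1,A_2$ on the $C_5$-invariant line, i.e. $Q_1$ is the image of the contracted proper transform of $K$. The main obstacle I expect is translating these geometric descriptions into the precise incidence facts needed to exclude a bad configuration — in particular, verifying that $Q_1$ and $Q_2$ do not lie on any of the ten lines ($(-1)$-curves) of $S_5$, since a point lying on a line is exactly what would prevent the blow up from being del Pezzo. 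Here the $G_{20}$-equivariance is the decisive tool: the set of $(-1)$-curves through $Q_i$ would be permuted by the stabilizer of $Q_i$, and because $\{Q_1,Q_2\}$ is the unique short orbit (\autoref{Lem_OrbitsDP5}) the stabilizers are large (of order $10$), so any such configuration of lines would have to be invariant under a $D_{10}$-action, which the known combinatorics of the ten lines on $S_5$ forbids.

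Finally I would assemble these observations: distinctness of $Q_1,Q_2$ is immediate since they form an orbit of length $2$; the exclusion of the collinearity and conic degeneracies follows from the equivariance argument above together with the fact that $Q_2$ lies off $K$ while $Q_1$ comes from $K$, so the two points are genuinely in general position relative to the four points cut out by the $S_5$ structure. Hence the six points in the $\Pro^2$ model are in general position, the blow up $\widetilde S$ is a smooth cubic surface, and in particular $-K_{\widetilde S}$ is ample, so $\widetilde S$ is a del Pezzo surface as claimed. I would remark that this also identifies $\widetilde S$ as a cubic surface, consistent with \autoref{MaintechnicalResult}, though pinning it down as the Clebsch cubic is deferred to the later analysis.
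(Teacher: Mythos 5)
Your overall strategy is the same as the paper's: reduce the lemma to checking that $Q_1$ and $Q_2$ avoid the finitely many curve classes whose proper transforms would violate ampleness of $-K_{\widetilde S}$, namely the ten $(-1)$-curves and the five conic classes ($C^2=0$) of $S_5$, and handle the $(-1)$-curves by an equivariance argument. Your treatment of the lines is sound, and arguably better justified than the paper's appeal to $\Pic^{G_{20}}(S_5)=\Z$: since the $C_5\subset G_{20}$ fixes no line, every orbit of $\mathrm{Stab}(Q_i)\cong D_{10}$ on the ten lines has length $5$ or $10$, while no point of $S_5$ lies on more than two lines (the intersection graph of the ten lines is the Petersen graph, which has no triangles), so no line passes through $Q_1$ or $Q_2$.

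The genuine gap is the conic case. You claim the "collinearity and conic degeneracies" (in your $\Pro^2$ picture: a line through $Q_1$, $Q_2$ and one of the four blown-up points, or a conic through all six points) follow "from the equivariance argument above", but that argument does not transfer. The five conic pencils form a single $G_{20}$-orbit, so the relevant stabilizer is only a $C_4$; if the member of pencil $j$ through $Q_1$ coincided with the member through $Q_2$, the only conclusion would be that this one conic is $C_4$-invariant, with the generator of $C_4$ swapping $Q_1$ and $Q_2$ on it — no short orbit of curves and no contradiction with the combinatorics of the lines arises. Excluding this configuration requires genuine geometric input, for instance via the model of \autoref{Lem_C5P2}: writing $S_5$ as the blow-up of $\Pro^2$ at the five points $P_1,\dots,P_5$ on the conic $K$, followed by the contraction of the proper transform of $K$ to the point $Q_1$, one computes that the unique member of pencil $j$ through $Q_1$ is precisely the image of the exceptional curve over $P_j$, and this curve misses $Q_2$ because $Q_2$ lies over the fixed point $B\notin\{P_1,\dots,P_5\}$. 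To be fair, the paper's own proof also omits this verification (explicitly, citing heavy computation), so your proposal is no less complete than the published argument; but your assertion that equivariance disposes of the conic degeneracies is incorrect as stated, and this step must either be computed or argued along the lines just indicated.
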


\begin{proof}
	We need to prove that $-K_{\widetilde{S}}$ is ample. This is equivalent to saying that $Q_1$ and $Q_2$ neither lie on the $(-1)$-curves nor in an exceptional conic in $S_5$. We prove this by contradiction. For this we will consider different cases.
	\begin{enumerate}[label=\arabic*)]
		\item We first prove that there are no $-1$ curves containing $Q_1$ or $Q_2$.\\
		Assume $Q_1$ lies on one of the 10 exceptional curves in $S_5$. Clearly $Q_2$ needs to lie on such a curve as well. If they lie on two different exceptional curves these two are interchanged by the group action of $G_{20}$. This contradicts the fact that $\Pic^{G_{20}}(S_5)=\Z$.\\
		Similarly we may assume that $Q_1$ lies on one of the intersections of two exceptional curves. Again this contradicts $\Pic^{G_{20}}(S_5)=\Z$. So indeed $Q_1$ and $Q_2$ do not lie on the $(-1)$-curves in $S_5$ which proves that the blow up of these two points yields another del Pezzo surface.
		
		\item It remains to show that $Q_1$ and $Q_2$ are not contained in an exceptional conic $S_5$.
		 There are 5 classes of conic in $S_5$ and each of them has self intersection $C^2=0$. Going through all these cases in detail on can show that $Q_1$ and $Q_2$ either lie on one line which we ruled our previously or the cannot lie on one conic. Due to heavy computational work we omit the different cases at this point. Thus \autoref{Lem_SmoothBlowupS5} is proven.	
	\end{enumerate}
.
\end{proof}
 The resulting surface of this blow up will have degree $5-2=3$, so it is a cubic surface. The only smooth cubic surface with a $G_{20}$-action is the \emph{Clebsch cubic surface} (this was proved in \cite{Hosoh}) which we will investigate in the next section.

\section{The Clebsch cubic surface}
\label{Sec_ClebschCubic}
\autoref{MaintechnicalResult} states that the only $G$-Sarkisov links starting from the quintic del Pezzo surface $S_5$ are of the form
\begin{align}
\tag{$\blacklozenge$}
\label{MainDiagram}
\begin{split}
\xymatrix{
	&\widetilde{S}\ar@{->}[ld]_{\pi}\ar@{->}[dr]^{\sigma}\\
	S_5 \ar@{-->}[rr]_{\psi}&& \Pro^1 \times \Pro^1
}
\end{split}
\end{align}
From \autoref{Lem_OrbitsDP5} we know that $\pi$ is the blow up of the unique $G_{20}$-orbit $\left\{Q_1, Q_2\right\}$ of length 2. Hence $\widetilde{S}$ is the Clebsch cubic surface, which is defined as follows:
\begin{Definition}
	\label{Defn_Clebsch}
		The \emph{Clebsch cubic surface}, denoted by $\widetilde{S}$, is a cubic given by two defining equations in $\mathbb{P}^4$:
	\begin{align*}
	\begin{cases}
	x_0+x_1+x_2+x_3+x_4=0;\\
	x_0^3+x_1^3+x_2^3+x_3^3+x_4^3=0.
	\end{cases}
	\end{align*}
\end{Definition}

\begin{Remarkno}
	\label{Prop_Picard_Z2_Clebsch}
	In \cite{Do-Is} it is shown that $\Pic^{G_{20}}(\widetilde{S})\neq \Z$. The link \eqref{MainDiagram} proves that in fact,
	$
	\Pic^{G_{20}}(\widetilde{S})=\mathbb{Z}^2.
	$

\end{Remarkno}

Now it is well known that the automorphism group of the Clebsch cubic surface, which we will call $\widetilde{S}$, is $\Aut(\widetilde{S})=\mathfrak{S}_5$. Thus the action of $G_{20}$ can be described very explicitly by understanding $G_{20}$ as a subgroup of $\mathfrak{S}_5$ acting by permutation on the coordinates of this surface.\\
We know that all representation of $G_{20}$ are conjugate to each other and thus we will use a generation by $\sigma_{(12345)}$ and $\sigma_{(2354)}$, where we use the notation introduced in \cite{Blanc_thesis}. Considering orbits of length 4 on $\widetilde{S}$ we obtain

\begin{Lemma}
	\label{Lem_OrbClebsch4}
	There is a unique orbit of length $4$ of the $G_{20}$-action on the Clebsch cubic surface given by the points 
	\begin{align*}
	\mathcal{O}=\bigg\{\left(1:\zeta:\zeta^2:\zeta^3:\zeta^4\right),\left(1:\zeta^2:\zeta^4:\zeta:\zeta^3\right),\\ \left(1:\zeta^3:\zeta:\zeta^4:\zeta^2\right),\left(1:\zeta^4:\zeta^3:\zeta^2:\zeta\right)\bigg\},
	\end{align*}
	with $\zeta$ being a primitive fifth root of unity.
	
\end{Lemma}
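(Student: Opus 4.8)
The plan is to identify $G_{20}$ with the affine group $\mathrm{AGL}(1,5)$ acting on the index set $\{0,1,2,3,4\} = \mathbb{F}_5$ of the coordinates, where the normal $C_5 = \langle\sigma_{(12345)}\rangle$ consists of the translations $k \mapsto k+b$ and the complement $C_4 = \langle\sigma_{(2354)}\rangle$ consists of the multiplications $k \mapsto ak$ by $a \in \mathbb{F}_5^{\ast}$; one checks directly that $\sigma_{(2354)}$ corresponds to $k \mapsto 2k$. Writing $\zeta$ for a fixed primitive fifth root of unity, I would set $P_a = (1:\zeta^a:\zeta^{2a}:\zeta^{3a}:\zeta^{4a})$ for $a \in \{1,2,3,4\}$, which are exactly the four points listed in $\mathcal{O}$.

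Then I would establish three things. First, that each $P_a$ lies on $\widetilde{S}$: the two defining equations evaluate on $P_a$ to the sums $\sum_{k=0}^{4} \zeta^{ak}$ and $\sum_{k=0}^{4} \zeta^{3ak}$, which vanish because $a$ and $3a$ are nonzero modulo $5$, so these are complete sums of fifth roots of unity. Second, that $\mathcal{O}$ is a single $G_{20}$-orbit: since a permutation $g$ acts by $(g\cdot x)_k = x_{g^{-1}(k)}$, the translation generator sends $P_a$ to $\zeta^{-a}P_a = P_a$ projectively (so $C_5$ fixes every $P_a$), while $\sigma_{(2354)}$ sends $P_a$ to $P_{3a}$; as $3$ generates $\mathbb{F}_5^{\ast}$, the $C_4$-action permutes $\{P_1,P_2,P_3,P_4\}$ cyclically, hence transitively. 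Consequently $\mathcal{O}$ is one orbit of length $4$, and the stabiliser of each of its points is exactly $C_5$.

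For uniqueness I would argue via the orbit--stabiliser theorem: an orbit of length $4$ forces a stabiliser of order $5$, which must be the unique normal Sylow subgroup $C_5 \subset G_{20}$, so every point of such an orbit is $C_5$-fixed. It therefore suffices to determine the whole $C_5$-fixed locus on $\widetilde{S}$. Since $C_5$ acts through the cyclic permutation matrix, whose eigenvalues are the five distinct fifth roots of unity, it has exactly five fixed points on $\mathbb{P}^4$, namely the eigenlines $(1:\lambda:\lambda^2:\lambda^3:\lambda^4)$ with $\lambda^5=1$. The value $\lambda = 1$ gives $(1:1:1:1:1)$, which violates $x_0+\cdots+x_4=0$ and so does not lie on $\widetilde{S}$; the remaining four are precisely $P_1,\dots,P_4$. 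Hence the $C_5$-fixed locus on $\widetilde{S}$ equals $\mathcal{O}$, a single $G_{20}$-orbit, and no further orbit of length $4$ can exist.

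I expect the only delicate point to be the bookkeeping: pinning down the identification of $\sigma_{(2354)}$ with multiplication by $2$ on $\mathbb{F}_5$ and keeping the $g^{-1}$ in the coordinate action straight, so that the claimed transitive map $P_a \mapsto P_{3a}$ (rather than some noncyclic relabelling) comes out correctly. Everything else reduces to the elementary vanishing of sums of roots of unity and the eigenvector description of the $C_5$-fixed points, both of which are routine.
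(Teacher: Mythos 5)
Your proof is correct and follows essentially the same route as the paper: identify the stabiliser of a length-$4$ orbit with the unique (normal) $C_5 \subset G_{20}$, compute its fixed locus on $\widetilde{S}$, and verify that these points lie on the surface and are permuted transitively by $C_4$. The paper leaves these steps as ``straightforward calculations''; your eigenvector description of the $C_5$-fixed points and the root-of-unity sums simply supply the omitted details.
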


\begin{proof}
	An orbit of length 4 has the stabilizer $\operatorname{Stab}_G = K \cong C_5$ which is isomorphic to the group generated by  $\sigma_{(12345)}$, which is the unique subgroup of $G_{20}$ isomorphic to $C_5$. It has exactly the fixed points as stated in \autoref{Lem_OrbClebsch4}, which are obtained by straightforward calculations.
	It is easy to verify that these 4 points lie indeed on $\widetilde{S}$ and form an orbit of length 4.
\end{proof}

The orbits of lengths 5 are a bit more sophisticated. 
\begin{Lemma}
	\label{Lem_OrbClebsch5}
	There are three orbits of length $5$ of the $G_{20}$-action on the Clebsch cubic surface given by:
	\begin{align*}
	\mathcal{O}_1 = \bigg\{V_1=\left(0:-1:1:1:-1\right), V_2=\left(-1:0:-1:1:1\right),\\V_3=\left(1:-1:0:-1:1\right),
	V_4=\left(1:1:-1:0-1\right),\\V_5=\left(-1:1:1:-1:0\right)\bigg\},\\
	\mathcal{O}_2 = \bigg\{U_1=\left(0:-i:-1:1:i\right), U_2=\left(i:0:-i:-1:1\right),\\U_3=\left(1:i:0:-i:-1\right),
	U_4=\left(-1:1:i:0:-i\right),\\U_5=\left(-i:-1:1:i:0\right)\bigg\},\\
	\mathcal{O}_3 = \bigg\{W_1=\left(0:i:-1:1:-i\right), W_2=\left(-i:0:i:-1:1\right),\\W_3=\left(1:-i:0:i:-1\right),
	W_4=\left(-1:1:-i:0:i\right),\\W_5=\left(i:-1:1:-i:0\right)\bigg\}.
	\end{align*}
\end{Lemma}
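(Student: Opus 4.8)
The plan is to mirror the proof of \autoref{Lem_OrbClebsch4}: an orbit of length $5$ must have a stabilizer of order $20/5=4$, i.e.\ a Sylow $2$-subgroup of $G_{20}$. Since $G_{20}\cong C_5\rtimes C_4$, all its Sylow $2$-subgroups are cyclic of order $4$ and mutually conjugate, and there are exactly five of them. Consequently every orbit of length $5$ contains a point fixed by the distinguished $C_4=\langle\sigma_{(2354)}\rangle$, so it suffices to determine the $C_4$-fixed points on $\widetilde S$ and then take their $G_{20}$-orbits.

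First I would compute the fixed locus of $\sigma_{(2354)}$ on $\Pro^4$. Regarded as a coordinate permutation, $\sigma_{(2354)}$ is a linear map whose cycle type (one fixed index and one $4$-cycle) gives eigenvalues $1$ with multiplicity $2$ and $i,-1,-i$ each simple. The $\sigma_{(2354)}$-fixed points of $\Pro^4$ are exactly the projectivisations of these eigenspaces. After suitably ordering the coordinates, the eigenvalue-$1$ eigenspace consists of the vectors $(a:b:b:b:b)$; imposing the two Clebsch equations $\sum x_i=0$ and $\sum x_i^3=0$ forces $a=-4b$ and then $-60\,b^3=0$, hence $a=b=0$, so this $\Pro^1$ does not meet $\widetilde S$. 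Each of the three remaining eigenvalues yields a single point of $\Pro^4$, and a direct substitution shows that all three lie on $\widetilde S$; up to scaling they are $V_1$ (eigenvalue $-1$), $U_1$ (eigenvalue $i$) and $W_1$ (eigenvalue $-i$).

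Next I would check that each of these points has stabilizer exactly $C_4$, so that its orbit really has length $5$. Since any subgroup of $G_{20}$ containing $C_4$ has order divisible by $4$ and dividing $20$, it is either $C_4$ or $G_{20}$; thus a $C_4$-fixed point lies either on an orbit of length $5$ or is $G_{20}$-fixed. But $\widetilde S$ carries no $G_{20}$-fixed point: such a point would in particular be fixed by $C_5=\langle\sigma_{(12345)}\rangle$, and the $C_5$-eigenpoints lying on $\widetilde S$ are precisely the four points of \autoref{Lem_OrbClebsch4} (the fifth $C_5$-eigenpoint $(1:1:1:1:1)$ is not on $\widetilde S$), which form a single $C_4$-orbit and hence contain no $C_4$-fixed point. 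Therefore $V_1,U_1,W_1$ all have orbits of length exactly $5$.

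Finally I would argue distinctness and completeness. As the normaliser of a Sylow $2$-subgroup has index equal to the number $5$ of such subgroups, it equals $C_4$ itself; hence no nontrivial element of $G_{20}$ carries one $C_4$-fixed point to another, and $V_1,U_1,W_1$ lie in three distinct orbits. Because $|C_5|$ is coprime to the order of the stabilizer, $C_5$ acts freely on each length-$5$ orbit, so the orbits are obtained by applying the powers of $\sigma_{(12345)}$ to $V_1,U_1,W_1$, which reproduces the explicit lists $\mathcal{O}_1,\mathcal{O}_2,\mathcal{O}_3$. Since every length-$5$ orbit was shown to meet the $C_4$-fixed locus, these are all of them. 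The one genuine subtlety, and hence the step I would treat most carefully, is this completeness bookkeeping: confirming via Sylow theory that the stabilizers have order exactly $4$ and that the three eigenpoints fall into three distinct orbits, rather than the eigenspace computation itself, which is routine.
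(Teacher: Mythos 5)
Your proposal is correct and follows essentially the same route as the paper's own proof: both reduce the problem to computing the fixed points of the distinguished $C_4=\langle\sigma_{(2354)}\rangle$ inside the hyperplane $\sum x_i=0$ (finding the three points $V_1,U_1,W_1$ on $\widetilde S$ and discarding the fourth fixed point $(-4:1:1:1:1)$, which fails the cubic equation), and then generate the orbits by applying powers of $\sigma_{(12345)}$. The only difference is that you spell out the bookkeeping the paper compresses into ``thus we deduce'' --- the eigenvalue description of the fixed locus, the verification that stabilizers are exactly $C_4$ (no $G_{20}$-fixed points), and distinctness of the three orbits via the self-normalizing Sylow $2$-subgroups --- which is a sharpening of the same argument, not a different one.
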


\begin{proof}
	An orbit of length 5 has the stabilizer $\operatorname{Stab}_G=H\cong C_4$ in $G_{20}$. There are five subgroups of $G_{20}$ which are isomorphic to $C_4$. Let $H \cong C_4$ be the subgroup generated by $\sigma_{(2354)}$. Then $H$ fixes four points in $\mathbb{P}^4$ with $\sum\limits\limits_{i=1}^{5}x_i=0$ which are:
	
	\begin{align*}
	R_1=\left(0:-1:1:1:-1\right),R_2=\left(0:-i:-1:1:i\right),\\
	R_3=\left(0:i:-1:1:-i\right),R_4=\left(-4:1:1:1:1\right),
	\end{align*}
	whereas the $R_4$ does not lie on $\widetilde{S}$ because the cubes of the coordinates do not sum to zero. Again it is easy to verify that the points $\left(R_1,...,R_4\right)$ are indeed fixed points. Acting by an element of order $5$, we obtain fixed points corresponding to the action of $\sigma_{(12345)}$ on the coordinates of $R_i$. Thus we deduce, that there are three orbits of length 5 on $\widetilde{S}$ as stated in \autoref{Lem_OrbClebsch5}.
\end{proof}

We shall notice that $R_2$ and $R_3$ lie on the line $x_1+x_4=x_2+x_3=0$. Generalising this we make the following important observation.

\begin{LemCorollary}
	\label{Cor_5skewlinesClebsch}
	The points $U_i \in \mathcal{O}_2$ and $W_i \in \mathcal{O}_3$ respectively lie on one of the $27$ real lines on the Clebsch cubic surfaces. These $5$ resulting lines in the link are
	\begin{enumerate}[label=(\roman{*})]
		\item $L_1:x_1+x_4=x_2+x_3=0$ through $U_1$ and $W_1$.
		\item $L_2:x_0+x_2=x_3+x_4=0$ through $U_2$ and $W_2$.
		\item $L_3:x_0+x_4=x_1+x_3=0$ through $U_3$ and $W_3$.
		\item $L_4:x_0+x_1=x_2+x_4=0$ through $U_4$ and $W_4$.
		\item $L_5:x_0+x_3=x_1+x_2=0$ through $U_5$ and $W_5$.
	\end{enumerate}
It is easy to see that these $5$ lines are disjoint.
\end{LemCorollary}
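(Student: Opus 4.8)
The plan is to treat the five lines uniformly, using the $C_5\cong\langle\sigma_{(12345)}\rangle$ symmetry to reduce every assertion to the single line $L_1$. Concretely, I would first verify that $L_1$ lies on $\widetilde S$ and passes through $U_1$ and $W_1$, then transport these facts to $L_2,\dots,L_5$ by the generator $\sigma_{(12345)}$, and finally establish pairwise disjointness, again reducing to two representative cases.

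First I would check that $L_1$ is a line on $\widetilde S$. On the locus $x_1+x_4=x_2+x_3=0$ the hyperplane relation $x_0+x_1+x_2+x_3+x_4=0$ collapses to $x_0=0$, so inside the Clebsch surface the equations of $L_1$ cut out the line parametrised by $(0:s:t:-t:-s)$. On this line the cubic form reduces to
\[
x_0^3+x_1^3+x_2^3+x_3^3+x_4^3 = s^3+t^3+(-t)^3+(-s)^3 = 0,
\]
so $L_1\subseteq\widetilde S$. A direct substitution shows that $U_1=(0:-i:-1:1:i)$ and $W_1=(0:i:-1:1:-i)$ satisfy $x_0=0$, $x_1+x_4=0$ and $x_2+x_3=0$, whence $U_1,W_1\in L_1$; note that $U_1=R_2$ and $W_1=R_3$ in the notation of \autoref{Lem_OrbClebsch5}.

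Next I would propagate by symmetry. The $5$-cycle $\sigma_{(12345)}$ cyclically shifts the coordinate indices modulo $5$; with the convention under which it generates $\mathcal O_2$ and $\mathcal O_3$ from $R_2=U_1$ and $R_3=W_1$ (so that $U_k\mapsto U_{k+1}$ and $W_k\mapsto W_{k+1}$), the same element sends $L_1\mapsto L_2\mapsto\cdots\mapsto L_5\mapsto L_1$, as one checks on the distinguished index and the two index-pairs defining each line. Therefore $L_k\subseteq\widetilde S$ and $U_k,W_k\in L_k$ for every $k$. Since $\widetilde S$ is a smooth cubic surface it contains exactly $27$ lines, and each $L_k$, being cut out by real (indeed rational) linear forms, is automatically one of the $27$ real lines of the Clebsch cubic.

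It remains to prove the five lines are pairwise disjoint. Under the cyclic action the unordered pairs $\{L_i,L_j\}$ split into only two orbits, the adjacent pairs $\{L_i,L_{i+1}\}$ and the distance-two pairs $\{L_i,L_{i+2}\}$, so it suffices to check $L_1\cap L_2=\varnothing$ and $L_1\cap L_3=\varnothing$. Substituting the parametrisation $(0:s:t:-t:-s)$ of $L_1$ into the two linear equations defining $L_2$ (respectively $L_3$) yields a linear system forcing $s=t=0$, which is not a point of $\mathbb P^4$, so both intersections are empty. The only genuine obstacle here is bookkeeping: fixing the index convention for $\sigma_{(12345)}$ so that the cyclic relabelling of the $L_k$ agrees exactly with the relabelling of the $U_k$ and $W_k$. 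Once that convention is pinned down, every remaining step is an elementary substitution.
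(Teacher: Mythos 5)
Your proof is correct, and it is organized somewhat differently from the paper's. The paper disposes of the statement in one sentence: compute the line through each pair $U_i, W_i$ and compare it against the classically known list of $27$ lines on the Clebsch cubic. You instead verify directly that the stated equations define a line contained in $\widetilde S$ (via the parametrisation $(0:s:t:-t:-s)$, on which both the hyperplane and the cubic form vanish identically), and then use the $C_5$-equivariance of the whole configuration to transport everything from $L_1$ to $L_2,\dots,L_5$; I checked that under the cyclic shift $(a_0:\dots:a_4)\mapsto(a_4:a_0:a_1:a_2:a_3)$, which is exactly the convention sending $U_k\mapsto U_{k+1}$ and $W_k\mapsto W_{k+1}$ in \autoref{Lem_OrbClebsch5}, one indeed has $L_k\mapsto L_{k+1}$ with the paper's labelling, so your propagation step is sound. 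This buys two things the paper's sketch does not provide: the argument is self-contained (no appeal to the known list of $27$ lines is needed, only to the fact that a line lying on a smooth cubic is one of its $27$ lines), and the symmetry reduction collapses five containment checks to one and ten disjointness checks to the two orbit representatives $\{L_1,L_2\}$ and $\{L_1,L_3\}$, both of which you verify correctly ($s=t=0$ in each case). The one point to keep explicit, as you do, is that the two linear equations define a line only after intersecting with the hyperplane $\sum x_i=0$; with that convention fixed, the proof is complete.
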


\begin{proof}
	This is an easy exercise of calculating the lines through each pair of points and comparing it with the lines on the Clebsch cubic, which are well known.
\end{proof}

\autoref{Lem_OrbClebsch4} and \autoref{Lem_OrbClebsch5} allow us to state the main result for this section.

\begin{Proposition}
	\label{Prop_OrbitClebsch}
	Let $\widetilde{S}$ be the Clebsch cubic surface. Then the $G_{20}$-orbits of length $r < 8$ on $\widetilde{S}$ are: 
	\begin{enumerate}[label=\alph*)]
		\item The unique orbit $\mathcal{O}$ described in \autoref{Lem_OrbClebsch4} of length 4.
		\item The three orbits $\mathcal{O}_1, \mathcal{O}_2$ and $\mathcal{O}_3$ described in \autoref{Lem_OrbClebsch5} of length 5.	
	\end{enumerate}
\end{Proposition}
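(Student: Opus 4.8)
The plan is to run through the possible orbit lengths with the orbit--stabiliser theorem and then match each surviving case against the points already produced in \autoref{Lem_OrbClebsch4} and \autoref{Lem_OrbClebsch5}. Since $|G_{20}|=20$, every orbit length divides $20$, so the only lengths $r<8$ that can occur are $r\in\{1,2,4,5\}$. Throughout I would exploit three structural facts about $G_{20}\cong C_5\rtimes C_4$: the subgroup $C_5=\langle\sigma_{(12345)}\rangle$ is the unique, hence normal, Sylow $5$-subgroup; $D_{10}$ is the unique subgroup of order $10$; and the Sylow $2$-subgroups (the copies of $C_4$) number five and are permuted transitively by conjugation under $C_5$, since $N_{G_{20}}(C_4)=C_4$ forces $C_5$ to fix none of them.

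First I would dispose of lengths $1$ and $2$. A point whose stabiliser has order at least $10$ is in particular fixed by $C_5$, hence is one of the five eigenvectors $P_j=(1:\zeta^{j}:\zeta^{2j}:\zeta^{3j}:\zeta^{4j})$ of the cyclic permutation matrix. Now $P_0=(1:1:1:1:1)$ violates $\sum x_i=0$, while $P_1,\dots,P_4$ lie on $\widetilde{S}$ and are precisely the four points of $\mathcal{O}$ from \autoref{Lem_OrbClebsch4}. A generator of $C_4$ acts on $C_5$ through multiplication by a primitive root modulo $5$, so it sends $P_j$ to $P_{2j}$; its square, which generates the $C_2$ inside $D_{10}$, then acts on $\{P_1,P_2,P_3,P_4\}$ by $j\mapsto -j$, a fixed-point-free double transposition. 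Hence no $P_j$ is fixed by $D_{10}$, so $\widetilde{S}$ carries neither a $G_{20}$-fixed point nor a $G_{20}$-orbit of length $2$.

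Next, an orbit of length $4$ has stabiliser of order $5$, necessarily the unique $C_5$, so its points lie among the eigenvectors $P_j$; the four of these lying on $\widetilde{S}$ form the single orbit $\mathcal{O}$, which gives part a). For length $5$ the stabiliser is one of the five conjugate copies of $C_4$. Any such point is fixed by some conjugate of $H=\langle\sigma_{(2354)}\rangle$, and because $C_5$ permutes these five conjugates transitively, every length-$5$ orbit meets the fixed locus of $H$ itself; moreover within a length-$5$ orbit distinct points have distinct stabilisers (if $g$ conjugated one stabiliser to itself then $g\in N_{G_{20}}(C_4)=C_4$ would fix the point), so $H$ fixes exactly one point of each such orbit. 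By \autoref{Lem_OrbClebsch5} the $H$-fixed points on $\widetilde{S}$ are exactly $R_1,R_2,R_3$, since the fourth candidate $R_4$ fails the cubic equation, and an $H$-fixed point has stabiliser of order $4$ or $20$ with the latter already excluded. These three points generate the three distinct length-$5$ orbits $\mathcal{O}_1,\mathcal{O}_2,\mathcal{O}_3$ under $C_5$, establishing part b).

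The only step demanding genuine care is the completeness of the length-$5$ count, namely that every length-$5$ orbit is detected by the single chosen subgroup $H$; this rests on the transitive $C_5$-action on the set of Sylow $2$-subgroups together with the bijection, inside a fixed orbit, between points and their stabilising conjugates of $C_4$. The remaining verifications---that $P_1,\dots,P_4$ and $R_1,R_2,R_3$ satisfy both defining equations of $\widetilde{S}$ while $R_4$ does not---are the routine computations already recorded in \autoref{Lem_OrbClebsch4} and \autoref{Lem_OrbClebsch5}.
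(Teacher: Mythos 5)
Your proof is correct, and it follows the same overall skeleton as the paper's: restrict the possible lengths by the orbit--stabiliser theorem, then analyse stabilisers through the fixed loci of $K=\langle\sigma_{(12345)}\rangle$ and $H=\langle\sigma_{(2354)}\rangle$. The differences lie in execution, and they work in your favour. For lengths $1$ and $2$, the paper argues that $H$ and $K$ have ``completely different fixed points'' and that a $D_{10}$-fixed point would be fixed by $K$ and by the involution $\sigma_{(25)(34)}\in H$; as stated this is loose, since the fixed locus of that involution is a priori larger than the fixed locus of $H$, so disjointness of the fixed loci of $H$ and $K$ does not by itself finish the argument. Your version closes this gap cleanly: any stabiliser of order at least $10$ contains the normal $C_5$, the $C_5$-fixed points on $\widetilde{S}$ are exactly $P_1,\dots,P_4$, and the involution in $D_{10}$ acts on these by $j\mapsto -j$ without fixed points. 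Second, for the completeness of the length-$5$ count the paper relies on the explicit coordinates in \autoref{Lem_OrbClebsch5} (the fifteen listed points are visibly distinct and exhaust the fixed loci of the conjugates of $H$), whereas you replace the computation by the observation that the Sylow $2$-subgroups of $G_{20}$ are self-normalising, so distinct points of a length-$5$ orbit have distinct stabilisers and each conjugate of $C_4$, in particular $H$ itself, fixes exactly one point per orbit; this is precisely what makes ``three $H$-fixed points on $\widetilde{S}$'' equivalent to ``three orbits of length $5$'' without listing any points. Both routes are valid; yours is more robust (it is independent of the coordinate computations and makes the group-theoretic mechanism explicit), while the paper's is quicker given that the explicit orbits are needed later in the argument anyway.
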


\begin{proof}
	The orbit-stabilizer theorem tells us immediately that orbits of length $r=6$ or $r=7$ can not exist. It remains to show that there are no orbits of length 1 or 2 on $\widetilde{S}$. This follows directly from our description of the orbits but we include computationally explanation, too. An orbit of length 1 would have the whole group $G_{20}$ as its stabilizer. We see immediately that this is not possible because the subgroups $K$ and $H$ generated by $\sigma_{(12345)}$ and $\sigma_{(2354)}$ have completely different fixed points.\\
	By a similar argument there can not be any orbits of length $2$. These would have the subgroup $F \cong D_{10}$ generated by $\sigma_{(12345)}$ and $\sigma_{(25)(34)}$ as its stabilizer. Again it is easy to verify that $F$ has $K\cong C_5$ as a subgroup. On the other hand $F$ has the group generated isomorphic to $C_2$ generated by  $\sigma_{(25)(34)}$ which is a subgroup of $H$ as a subgroup.\\
	 But we have seen that $H$ and $K$ do not have any common fixed points. Hence $F$ can not have fixed points which means that there does not exist an orbit of length 2.  
\end{proof}

\begin{Remark}
	\autoref{Prop_OrbitClebsch} supports the statement of \autoref{Lem_OrbitsDP5}. For the unique orbit $\mathcal{O}$ of length 4 each pair of points lies on one of 27 real lines on the Clebsch cubic. Hence after contracting 2 of them to obtain $S_5$, we are left with an orbit of length 2.\\
	An orbit of length 4 in $S_5$ would lift to a different orbit of length 4 in $\widetilde{S}$, but for the given reason this can not be $\mathcal{O}$, which means that there do not exist orbits of length 4 in the quintic del Pezzo surface.
\end{Remark}

Given \autoref{Cor_5skewlinesClebsch} we may consider the contraction of these 5 lines. 

\begin{Proposition}
	\label{Prop_Contraction5lines}
	The contraction of the $5$ lines $L_1,...,L_5$ described in \autoref{Cor_5skewlinesClebsch} yields the surface $\Pro^1 \times \Pro^1$ and this is the only other contraction that can be conducted apart from the inverse of the blow up from $S_5$.
\end{Proposition}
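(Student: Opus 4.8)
The plan is to realise the contraction as a single morphism, identify the target by a degree and Picard-rank count, exclude the Hirzebruch surface, and then deduce the uniqueness statement from a classification of the $G_{20}$-invariant families of pairwise disjoint lines. First, by \autoref{Cor_5skewlinesClebsch} the lines $L_1,\dots,L_5$ are pairwise disjoint $(-1)$-curves, so Castelnuovo's criterion provides a single birational morphism $c\colon\widetilde S\to\overline S$ onto a smooth surface contracting exactly $L_1,\dots,L_5$. Contracting pairwise disjoint $(-1)$-curves on a smooth del Pezzo surface again yields a smooth del Pezzo surface, so $\overline S$ is del Pezzo with $K_{\overline S}^2=K_{\widetilde S}^2+5=8$ and $\rho(\overline S)=\rho(\widetilde S)-5=2$. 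The only del Pezzo surfaces of degree $8$ are $\Pro^1\times\Pro^1$ and the Hirzebruch surface $\mathbb F_1=\mathrm{Bl}_{\mathrm{pt}}\Pro^2$, so it remains to exclude $\mathbb F_1$. Since $\{L_1,\dots,L_5\}$ is a single $G_{20}$-orbit (it is the orbit of length $5$ carrying the points $U_i\in\mathcal O_2$ and $W_i\in\mathcal O_3$ of \autoref{Cor_5skewlinesClebsch}), the morphism $c$ is $G_{20}$-equivariant and $\overline S$ inherits a faithful $G_{20}$-action.

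To exclude $\mathbb F_1$ I argue by contradiction. If $\overline S\cong\mathbb F_1$, its unique $(-1)$-curve is automatically $G_{20}$-invariant, and contracting it yields a $G_{20}$-equivariant morphism $\overline S\to\Pro^2$, hence a faithful action of $G_{20}$ on $\Pro^2$, i.e.\ an injection $G_{20}\hookrightarrow\mathrm{PGL}_3(\C)$. This is impossible: the irreducible representations of $G_{20}$ have dimensions $1,1,1,1,4$, so every representation of dimension at most $3$ is a direct sum of one-dimensional characters and is therefore trivial on $C_5=[G_{20},G_{20}]$; and since $\gcd(|G_{20}|,3)=1$ the group $H^2(G_{20},\mu_3)$ vanishes, so every homomorphism $G_{20}\to\mathrm{PGL}_3(\C)$ lifts to a genuine $3$-dimensional linear representation, which can never be injective. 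This contradiction forces $\overline S\cong\Pro^1\times\Pro^1$. (Alternatively one verifies that the class $c^{*}(-K_{\overline S})=-K_{\widetilde S}+\sum_i L_i$ is divisible by $2$ in $\Pic(\overline S)\cong\langle L_1,\dots,L_5\rangle^{\perp}$, which is exactly what distinguishes $\Pro^1\times\Pro^1$ from $\mathbb F_1$; equivalently, that none of the remaining $22$ lines is disjoint from all five $L_i$, so that $\overline S$ carries no $(-1)$-curve.)

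For the uniqueness statement, any $G_{20}$-equivariant contraction of $\widetilde S$ to a del Pezzo surface blows down a $G_{20}$-invariant set of pairwise disjoint $(-1)$-curves, and the $(-1)$-curves on the cubic $\widetilde S$ are precisely its $27$ lines. A Burnside count (using that a $5$-cycle fixes $2$ of the lines, a double transposition fixes $7$, and a $4$-cycle fixes $1$) shows that the $27$ lines split into exactly four $G_{20}$-orbits, of sizes $2,5,10$ and $10$. Since a cubic surface carries at most $6$ pairwise disjoint lines, neither orbit of size $10$ — nor any set containing one — can be contracted. The orbit of size $2$ is $\{E_1,E_2\}$, the $\pi$-exceptional curves over $Q_1,Q_2$ (indeed these are the only two lines fixed by $C_5\subset G_{20}$, so they form the unique orbit of length $2$, and there is no invariant line nor any orbit of length $4$), and they are disjoint; the orbit of size $5$ is $\{L_1,\dots,L_5\}$, disjoint by \autoref{Cor_5skewlinesClebsch}. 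Finally $\{E_1,E_2\}\cup\{L_1,\dots,L_5\}$ has $7>6$ elements, so these two orbits cannot be contracted simultaneously. Hence the only $G_{20}$-equivariant contractions of $\widetilde S$ are $\pi$ (blowing down $E_1,E_2$ to $S_5$, the inverse of the blow-up of \autoref{MainTechResult1}) and $c$ (blowing down $L_1,\dots,L_5$ to $\Pro^1\times\Pro^1$).

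The main obstacle is the exclusion of $\mathbb F_1$ together with the orbit bookkeeping: the representation-theoretic dichotomy is what decisively separates $\Pro^1\times\Pro^1$ from $\mathbb F_1$ without a case-by-case incidence analysis, while the Burnside computation of the orbit sizes $2,5,10,10$, combined with the bound ``at most six skew lines on a cubic'', is what collapses all remaining possibilities at once.
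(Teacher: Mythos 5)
Your proof is correct, but it proceeds along a genuinely different route than the paper at both of its key steps. For the identification of the target, the paper disposes of $\mathbb{F}_1$ in one line: the unique $(-1)$-curve of $\mathbb{F}_1$ is automatically invariant, so $\Pic^{G_{20}}(\mathbb{F}_1)\neq\Z$, whereas contracting a single $G_{20}$-orbit from a surface with invariant Picard rank $2$ must produce a surface with $\Pic^{G_{20}}=\Z$; your representation-theoretic exclusion (no faithful map $G_{20}\to\mathrm{PGL}_3(\C)$, since the irreducible representations have dimensions $1,1,1,1,4$ and the lifting obstruction in $H^2(G_{20},\mu_3)$ vanishes) is a valid, self-contained substitute that never mentions invariant Picard groups. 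For uniqueness, the paper again argues via rank: $\Pic^{G_{20}}(\widetilde{S})=\Z^2$ (\autoref{Prop_Picard_Z2_Clebsch}) forces exactly two extremal rays of the invariant Mori cone, hence exactly two contractions; you instead classify the $G_{20}$-orbits on the $27$ lines directly (your Burnside count giving orbits of sizes $2,5,10,10$ is correct: $(27+4\cdot 2+5\cdot 7+10\cdot 1)/20=4$) and then eliminate every candidate invariant set of skew lines using the bound of at most six pairwise disjoint lines on a smooth cubic. What each approach buys: the paper's argument is shorter and conceptual, but it leans on $\Pic^{G_{20}}(\widetilde{S})=\Z^2$, which in the paper is itself extracted from the very link under construction together with the inequality $\Pic^{G_{20}}(\widetilde{S})\neq\Z$ quoted from Dolgachev--Iskovskikh; your combinatorial argument is longer but fully explicit, independent of that input, and in fact re-derives the two-ray structure (and hence the invariant Picard rank) as a byproduct of the orbit decomposition, so it removes any appearance of circularity. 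The only point you should make explicit is the standard fact that an equivariant birational morphism from $\widetilde{S}$ to a smooth surface can only contract pairwise disjoint $(-1)$-curves (negative definiteness of the exceptional lattice, plus the absence of $(-2)$-curves on a del Pezzo surface), which you use implicitly when reducing uniqueness to a statement about invariant sets of skew lines.
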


\begin{proof}
	We know that $\widetilde{S}$ is a del Pezzo surface, so $-K_{\widetilde{S}}$ is ample. \autoref{Rem_Glinks} tells us that the resulting surface of the described contraction will be a del Pezzo surface of degree $3+5=8$, so it can only be $\Pro^1 \times \Pro^1$ or $\mathbb{F}_1$, but $\Pic^{G_{20}}\left(\mathbb{F}_1\right) \neq \Z$, which we require.\\
	In \autoref{Prop_Picard_Z2_Clebsch} we have seen that $\Pic^{G_{20}}(\widetilde{S})=\Z^2$. From this we conclude that there are two external rays in the Mori cone. We have shown that one consists of two lines and the other one of 5. These are the only possible contraction of $\widetilde{S}$.
\end{proof}

\autoref{Prop_Contraction5lines} allows us to state the following lemma about the link \eqref{MainDiagram} which we introduced at the beginning of this section.
\begin{Lemma}
	\label{Lem_LinkClebschP1xP1}
	Considering the desired link \eqref{MainDiagram} from $S_5$ to $\Pro^1 \times \Pro^1$ we know
	\begin{enumerate}[label=\arabic*)]
		\item $\pi$ is the contraction of 2 disjoint lines $E_1, E_2$ in the Clebsch cubic surface (respectively the blow up of $Q_1$ and $Q_2$ in $S_5$)
		\item $\sigma$ is the contraction of 5 disjoint lines $F_1,...,F_5$ in the Clebsch cubic surface (respectively the blow up of 5 points in $\Pro^1 \times \Pro^1$).
		\item The following equations hold for the exceptional divisors:
		\begin{align*}
		\sigma^*(H)=2\pi^*\left(-K_{S_5}\right)- 3(E_1+E_2)\\
		\sum\limits_{i=1}^{5}F_i= 3\pi^*\left(-K_{S_5}\right) - 5(E_1+E_2),
		\end{align*}
		where $-K_{S_5}$ is the anticanonical divisor of $S_5$, $(E_1+E_2)$ are the two $(-1)$-curves of the blow up of $Q_1$ and $Q_2$, $H$ is a divisor of bidegree $(1,1)$ on $\Pro^1 \times \Pro^1$ and $\sum\limits_{i=1}^5F_i$ are the $(-1)$-curves of the blow up of 5 points in $\Pro^1 \times \Pro^1$.
	\end{enumerate}
\end{Lemma}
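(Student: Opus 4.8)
The plan is to treat the first two assertions as bookkeeping and to concentrate the real work on the numerical identities in part 3). For part 1), the statement that $\pi$ contracts exactly two disjoint lines $E_1,E_2$ is just a reformulation of \autoref{Lem_OrbitsDP5} and \autoref{Lem_SmoothBlowupS5}: the only admissible orbit to blow up is $\{Q_1,Q_2\}$, and the two exceptional $(-1)$-curves are disjoint since $Q_1\neq Q_2$. For part 2), I would invoke \autoref{Cor_5skewlinesClebsch} and \autoref{Prop_Contraction5lines}, which already exhibit the five disjoint lines $L_1,\dots,L_5$ (renamed $F_1,\dots,F_5$) whose contraction produces $\Pro^1\times\Pro^1$.

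The core of the argument is part 3), and the key observation is that all four classes involved, namely $A:=\pi^*(-K_{S_5})$, $E:=E_1+E_2$, $\sigma^*(H)$ and $\sum_{i=1}^5 F_i$, are $G_{20}$-invariant. Indeed $-K_{S_5}$ and $-K_{\widetilde{S}}$ are canonical, hence invariant; the orbit $\{Q_1,Q_2\}$ and the orbit of lines $\{L_1,\dots,L_5\}$ are $G_{20}$-stable, so $E$ and $\sum F_i$ are invariant; and since $2\sigma^*(H)=-K_{\widetilde{S}}+\sum F_i$ is invariant and $\Pic(\widetilde{S})$ is torsion-free, $\sigma^*(H)$ is invariant as well. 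By \autoref{Prop_Picard_Z2_Clebsch} we have $\Pic^{G_{20}}(\widetilde{S})=\Z^2$, so these four classes lie in a rank-$2$ lattice. Using the intersection numbers $A^2=(-K_{S_5})^2=5$, $A\cdot E=0$ and $E^2=-2$, the classes $A$ and $E$ are linearly independent and therefore form a $\Q$-basis of $\Pic^{G_{20}}(\widetilde{S})\otimes\Q$.

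It then remains to pin down the coefficients. Writing $\sigma^*(H)=\alpha A+\beta E$, I would compute two intersection numbers: from $-K_{\widetilde{S}}=2\sigma^*(H)-\sum F_i$ together with $\sigma^*(H)\cdot F_i=0$ one gets $\sigma^*(H)\cdot(-K_{\widetilde{S}})=2H^2=4$, while $(\sigma^*(H))^2=H^2=2$. Since $-K_{\widetilde{S}}=A-E$, these translate into $5\alpha+2\beta=4$ and $5\alpha^2-2\beta^2=2$, a system whose solutions are $(\alpha,\beta)=(2,-3)$ and $(\alpha,\beta)=(2/3,1/3)$. The spurious solution is eliminated by integrality: in the integral decomposition $\Pic(\widetilde{S})=\pi^*\Pic(S_5)\oplus\Z E_1\oplus\Z E_2$ the class $\tfrac13(2A+E)$ has $E_1$-coefficient $\tfrac13$ and so is not integral, leaving $\sigma^*(H)=2A-3E$. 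Finally, substituting back into $\sum F_i=2\sigma^*(H)+K_{\widetilde{S}}=2(2A-3E)-(A-E)=3A-5E$ yields the second identity.

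The main obstacle, and the place where the hypotheses really enter, is the reduction to the rank-$2$ invariant lattice: without $G_{20}$-invariance the class $\sigma^*(H)$ lives in the full rank-$7$ Picard group and need not be expressible through $A$ and $E$ alone. The remaining subtlety is the quadratic ambiguity, which I expect to resolve cleanly by the integrality remark above; as a consistency check one should also verify $\bigl(\sum F_i\bigr)^2=-5$ and $\sum F_i\cdot(-K_{\widetilde{S}})=5$ against $3A-5E$.
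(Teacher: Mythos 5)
Your proposal is correct, but there is nothing in the paper to compare it against: the paper states \autoref{Lem_LinkClebschP1xP1} with no proof at all (it is asserted after \autoref{Prop_Contraction5lines}, and the numerics go back to Proposition 7.13 of Dolgachev--Iskovskikh, which the author cites but does not reprove). Your argument therefore supplies the missing derivation, and it is sound. Parts 1) and 2) are indeed just \autoref{Lem_OrbitsDP5}, \autoref{Lem_SmoothBlowupS5}, \autoref{Cor_5skewlinesClebsch} and \autoref{Prop_Contraction5lines}. For part 3), your reduction to the rank-two invariant lattice is the right mechanism: all four classes are $G_{20}$-invariant (your torsion-freeness trick for $\sigma^*(H)$, via $2\sigma^*(H)=-K_{\widetilde{S}}+\sum F_i$, is clean), and $A=\pi^*(-K_{S_5})$, $E=E_1+E_2$ span $\Pic^{G_{20}}(\widetilde{S})\otimes\Q$ since $A^2=5$, $A\cdot E=0$, $E^2=-2$. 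The two constraints $5\alpha+2\beta=4$ and $5\alpha^2-2\beta^2=2$ do have exactly the solutions $(2,-3)$ and $(2/3,1/3)$, and the integrality of the $E_1$-coefficient in $\Pic(\widetilde{S})=\pi^*\Pic(S_5)\oplus\Z E_1\oplus\Z E_2$ legitimately kills the fractional one; substituting back gives $\sum F_i=3A-5E$, consistent with $\left(\sum F_i\right)^2=-5$ and $\sum F_i\cdot(-K_{\widetilde{S}})=5$. Two small remarks. First, rather than quoting the Remark labelled \autoref{Prop_Picard_Z2_Clebsch} (which the paper justifies by the link itself, so quoting it here has a whiff of circularity), you can get the rank-two statement directly: the decomposition $\Pic(\widetilde{S})=\pi^*\Pic(S_5)\oplus\Z E_1\oplus\Z E_2$ is $G_{20}$-equivariant, $\Pic^{G_{20}}(S_5)=\Z$, and $G_{20}$ permutes $E_1,E_2$, so the invariant part is exactly $\Z A\oplus\Z E$ up to finite index. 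Second, a marginally faster variant avoids the quadratic in $\alpha$ altogether by solving first for $\sum F_i=xA+yE$ from $\left(\sum F_i\right)^2=-5$ and $\sum F_i\cdot(-K_{\widetilde{S}})=5$; but this is cosmetic, and your route is complete as written.
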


\autoref{Lem_LinkClebschP1xP1} implies that $\pi(F_i)$ is a smooth twisted cubic curve in $S_5$ and $E_1$ and $E_2$ are smooth twisted cubics of bidegree $(2,1)$ and $(1,2)$ respectively.

\section{The surface $\mathbb{P}^1\times \mathbb{P}^1$}
\label{P1xP1}
The $G_{20}$-action on $\Pro^1 \times \Pro^1$ can not be understood in a way which is a simple as in \autoref{Sec_DP5} or \autoref{Sec_ClebschCubic}. For that reason we will use our previous observations to analyse the $G_{20}$-orbits on $\Pro^1 \times \Pro^1$.

\begin{Lemma}
	\label{Lem_OrbitP1xP14}
	 There is a unique $G_{20}$-orbit  $\mathcal{K}$  of length 4 in $\Pro^1 \times \Pro^1$. The four points are given by the intersections $F_{11} \cap F_{21}, F_{11} \cap F_{22}, F_{12} \cap F_{21}$ and $F_{12} \cap F_{22}$ of the four rulings $F_{11}, F_{12}, F_{21}$ and $F_{22}$.
\end{Lemma}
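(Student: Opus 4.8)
The plan is to transport the orbit structure already computed on the Clebsch cubic $\widetilde{S}$ across the contraction $\sigma$, and then to identify the four resulting points intrinsically as the fixed locus of the order-$5$ subgroup. First I would establish existence. By \autoref{Lem_LinkClebschP1xP1} the morphism $\sigma$ contracts the five disjoint lines $F_1,\dots,F_5$ and is a $G_{20}$-equivariant isomorphism away from these lines and the five points of $\Pro^1\times\Pro^1$ they contract to. By \autoref{Prop_OrbitClebsch} the surface $\widetilde{S}$ carries a unique $G_{20}$-orbit $\mathcal{O}$ of length $4$, described in \autoref{Lem_OrbClebsch4}. A direct check that the coordinates $(1:\zeta^k:\zeta^{2k}:\zeta^{3k}:\zeta^{4k})$ of the points of $\mathcal{O}$ satisfy none of the defining equations of $L_1,\dots,L_5$ from \autoref{Cor_5skewlinesClebsch} shows that $\mathcal{O}$ is disjoint from the contracted lines. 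Hence $\mathcal{K}:=\sigma(\mathcal{O})$ is a single $G_{20}$-orbit of length $4$ on $\Pro^1\times\Pro^1$.

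Next I would prove uniqueness by running the comparison backwards. Any $G_{20}$-orbit of length $4$ on $\Pro^1\times\Pro^1$ is disjoint from the five points blown up by $\sigma^{-1}$, since those form an orbit of length $5$ and hence a different orbit; therefore its preimage under $\sigma$ is a $G_{20}$-orbit of length $4$ on $\widetilde{S}$. By the uniqueness statement in \autoref{Prop_OrbitClebsch} this preimage must be $\mathcal{O}$, so the orbit itself is $\mathcal{K}$. This simultaneously yields transitivity (a single orbit) and uniqueness, without any explicit permutation computation.

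Finally I would identify $\mathcal{K}$ with the grid of rulings. Each point of $\mathcal{O}$ has stabilizer $K\cong C_5$, the unique subgroup of order $5$ generated by $\sigma_{(12345)}$, so by equivariance of $\sigma$ every point of $\mathcal{K}$ is fixed by $K$, whence $\mathcal{K}\subseteq \mathrm{Fix}(K)$. Since $\Pro^1\times\Pro^1$ is $G_{20}$-minimal we have $\Pic^{G_{20}}(\Pro^1\times\Pro^1)=\Z$, which forces the two rulings to be interchanged by the elements lying outside the unique index-$2$ subgroup $D_{10}$; as $K\subset D_{10}$, the subgroup $K$ preserves each ruling and therefore acts on each factor of $\Pro^1\times\Pro^1$. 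A short conjugation argument shows $K$ acts nontrivially on both factors, so it has exactly two fixed points $a_1,a_2$ on the first $\Pro^1$ and two fixed points $b_1,b_2$ on the second, giving $\mathrm{Fix}(K)=\{(a_i,b_j):i,j\in\{1,2\}\}$, a set of exactly four points. As $|\mathcal{K}|=4=|\mathrm{Fix}(K)|$ we conclude $\mathcal{K}=\mathrm{Fix}(K)$. Writing $F_{11}=\{a_1\}\times\Pro^1$ and $F_{12}=\{a_2\}\times\Pro^1$ for the two $K$-invariant lines of one ruling and $F_{21}=\Pro^1\times\{b_1\}$, $F_{22}=\Pro^1\times\{b_2\}$ for those of the other, we obtain $(a_i,b_j)=F_{1i}\cap F_{2j}$, exactly the description in the statement.

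The main obstacle is this last step: one must rule out that $K$ acts trivially on one ruling, which would make $\mathrm{Fix}(K)$ a pair of curves rather than four isolated points, and confirm that the four fixed points genuinely distribute as a $2\times 2$ grid over the two rulings. I expect to settle this by exploiting the factor-swapping order-$4$ element $\tau$: since $\tau$ normalises $K$ but exchanges the two factors, it conjugates the action of $K$ on the first factor to its action on the second, so one factor-action is trivial if and only if the other is; as $K$ acts faithfully this is impossible. This is the one place where the precise semidirect-product structure of $G_{20}$, rather than mere orbit counting, is essential.
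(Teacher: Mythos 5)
Your proof is correct, and while it rests on the same two pillars as the paper's own argument, it assembles them differently and more rigorously. The paper likewise gets existence by observing that the orbit of \autoref{Lem_OrbClebsch4} lies away from the contracted lines (you make this concrete: on the points $(1:\zeta^{a}:\zeta^{2a}:\zeta^{3a}:\zeta^{4a})$ each defining form of $L_1,\dots,L_5$ from \autoref{Cor_5skewlinesClebsch} is of the shape $\zeta^{j}(1+\zeta^{k})$ with $\zeta^{k}\neq -1$, so it never vanishes), and it likewise derives the grid description from the fact that the stabilizer $C_5$ of each point preserves the two rulings. You genuinely diverge in two places. First, your uniqueness step --- lifting an arbitrary length-$4$ orbit back through $\sigma$, noting it must be disjoint from the contracted length-$5$ orbit, and invoking the uniqueness of $\mathcal{O}$ on $\widetilde{S}$ from \autoref{Prop_OrbitClebsch} --- does not appear in the paper's proof; the paper only gestures at a Clebsch-side argument in the remark following the lemma, and its in-proof uniqueness is carried by the opaque sentence that ``4 points need 8 different lines to describe them.'' Second, you close a loophole the paper leaves open: that $K\cong C_5$ might act trivially on one factor, making $\mathrm{Fix}(K)$ a pair of curves rather than four points. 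Your conjugation argument via a factor-swapping element (which exists because $\Pic^{G_{20}}(\Pro^1\times\Pro^1)=\Z$ forces elements outside the unique index-two subgroup $D_{10}$ to exchange the rulings, and which normalises $K$ since $K$ is normal in $G_{20}$) rules this out cleanly. What each approach buys: the paper's version is shorter; yours is self-contained and rigorous, and your lifting mechanism is exactly the one the paper itself later relies on in \autoref{Prop_OrbitsP1xP1} to exclude orbits of length less than $4$, so your proof also makes that later step more transparent.
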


\begin{proof} The orbit of length 4 described in \autoref{Lem_OrbClebsch4} lies away from the lines of contraction, thus it has an embedding in $\Pro^1 \times \Pro^1$.\\
 Generally 4 points need 8 different lines to describe them. 
 A $G_{20}$-orbit of length 4 has the stabilizer $C_5$.
 But we know that $D_{10} \subset G_{20}$ acts on the rulings of $\Pro^1 \times \Pro^1$ which are copies of $\Pro^1$.
 Hence the $C_5$-action can not split over each of the 4 points (i.e. interchanging the two lines) but fixes the rulings.\\
  For this reason the 4 points on the $G_{20}$-orbit need to lie on the four intersection of four copies of $\Pro^1$ (i.e the rulings of $\Pro^1 \times \Pro^1$), because otherwise it would not be an orbit of length 4. Hence all four points in this orbit lie on two rulings in $\Pro^1 \times \Pro^1$.
\end{proof}

\begin{Remark}
	We could prove \autoref{Lem_OrbitP1xP14} in a different way by considering the orbit of length 4 in the Clebsch cubic surface and considering their configuration there. We can show merely computationally that there exists four conics each passing through exactly one of the four points and not intersecting the $(-1)$-curves. Considering the blow up $\sigma$ we obtain \autoref{Lem_OrbitP1xP14} for the four points on $\Pro^1 \times \Pro^1$. 
\end{Remark}

As in \autoref{Sec_ClebschCubic} the orbits of length 5 are a bit more difficult. 
\begin{Lemma}
	\label{Lem_OrbitsP1xP15}
	There are exactly two $G_{20}$-orbits  $\mathcal{K}_1$ and $\mathcal{K}_2$ of length 5 in $\Pro^1 \times \Pro^1$.
\end{Lemma}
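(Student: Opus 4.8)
The plan is to transport the classification of length-5 orbits from the Clebsch cubic $\widetilde{S}$ across the contraction $\sigma$ of \eqref{MainDiagram}. First I would record that, by \autoref{Prop_Contraction5lines} and \autoref{Lem_LinkClebschP1xP1}, the morphism $\sigma \colon \widetilde{S} \to \Pro^1 \times \Pro^1$ is the $G_{20}$-equivariant contraction of the five disjoint lines $L_1, \ldots, L_5$ from \autoref{Cor_5skewlinesClebsch}. Since $G_{20}$ permutes these five lines and $\Pic^{G_{20}}(\widetilde{S}) = \Z^2$ forbids any proper invariant subcollection, their images $\sigma(L_1), \ldots, \sigma(L_5)$ form a single $G_{20}$-orbit of length 5 in $\Pro^1 \times \Pro^1$; this is the first orbit, which I name $\mathcal{K}_1$.

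The second step is to use that $\sigma$ restricts to a $G_{20}$-equivariant isomorphism away from the exceptional locus $L_1 \cup \cdots \cup L_5$ and the five points $\mathcal{K}_1$. Consequently any $G_{20}$-orbit of length 5 in $\Pro^1 \times \Pro^1$ other than $\mathcal{K}_1$ is disjoint from $\mathcal{K}_1$, and therefore pulls back bijectively to a $G_{20}$-orbit of length 5 on $\widetilde{S}$ lying off the contracted lines. By \autoref{Prop_OrbitClebsch} the only orbits of length 5 on $\widetilde{S}$ are $\mathcal{O}_1, \mathcal{O}_2, \mathcal{O}_3$, and by \autoref{Cor_5skewlinesClebsch} the points of $\mathcal{O}_2$ and $\mathcal{O}_3$ all lie on the lines $L_i$ (namely $U_i, W_i \in L_i$). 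Hence the only length-5 orbit on $\widetilde{S}$ disjoint from the $L_i$ is $\mathcal{O}_1$, whose image $\mathcal{K}_2 := \sigma(\mathcal{O}_1)$ is forced to be the unique remaining orbit. This simultaneously establishes the existence of $\mathcal{K}_2$ and rules out any third orbit.

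To close the argument I would verify the two bookkeeping facts that make the transfer honest: that $\mathcal{O}_1$ really is disjoint from $L_1 \cup \cdots \cup L_5$, so that $\sigma|_{\mathcal{O}_1}$ is injective, $|\mathcal{K}_2| = 5$, and $\mathcal{K}_2 \neq \mathcal{K}_1$; and that $\mathcal{O}_2$ and $\mathcal{O}_3$ both collapse onto $\mathcal{K}_1$ rather than producing anything new, each line $L_i$ carrying exactly the pair $U_i, W_i$ that is contracted to the single point $\sigma(L_i) \in \mathcal{K}_1$. The disjointness of $\mathcal{O}_1$ from the lines is the only genuinely computational point, following by comparing the explicit coordinates $V_1, \ldots, V_5$ of \autoref{Lem_OrbClebsch5} against the equations of the $L_i$ in \autoref{Cor_5skewlinesClebsch}; I expect this to be the main (if modest) obstacle, since everything else is a direct transfer of \autoref{Prop_OrbitClebsch} through the $G_{20}$-equivariant isomorphism furnished by $\sigma$ away from its exceptional locus.
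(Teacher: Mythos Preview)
Your argument is correct, but it proceeds along a genuinely different route from the paper's own proof. You transport the classification of length-$5$ orbits from $\widetilde{S}$ through the contraction $\sigma$: the five blown-down lines give $\mathcal{K}_1$, and the unique length-$5$ orbit on $\widetilde{S}$ disjoint from $L_1\cup\cdots\cup L_5$, namely $\mathcal{O}_1$, descends to $\mathcal{K}_2=\sigma(\mathcal{O}_1)$. The paper instead realises $\Pro^1\times\Pro^1$ as the explicit quadric $\mathcal{Q}=\{\sum x_i=\sum x_i^2=0\}\subset\Pro^4$ with $G_{20}$ acting by permutation of coordinates, and checks directly which of the $C_4$-fixed points $R_1,\dots,R_4$ from \autoref{Lem_OrbClebsch5} lie on $\mathcal{Q}$; only $R_2,R_3$ do, yielding $\mathcal{K}_1=\mathcal{O}_2$ and $\mathcal{K}_2=\mathcal{O}_3$. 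Note in particular that the paper's labelling of the two orbits is \emph{different} from yours: what you call $\mathcal{K}_2$ corresponds to $\mathcal{O}_1$, whereas the paper's $\mathcal{K}_1,\mathcal{K}_2$ are $\mathcal{O}_2,\mathcal{O}_3$ viewed in the quadric model.

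Your approach is more intrinsic and keeps the Sarkisov link in the foreground, needing only the single coordinate check that $V_1\notin L_j$ for all $j$ (equivariance then handles the rest). The paper's approach, by contrast, produces explicit coordinates for the two orbits on $\Pro^1\times\Pro^1$, which it immediately uses to verify that the five points of each orbit are in general position and that the two orbits are exchanged by complex conjugation (facts invoked in \autoref{Prop_OrbitsP1xP1} and \autoref{Lem_Blowuporbit5P1xP1}); if you follow your route you should be prepared to supply these later. The paper's approach also tacitly assumes that the permutation action on $\mathcal{Q}$ agrees with the $G_{20}$-action induced from the link, a point your argument sidesteps entirely.
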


\begin{proof}
	We will now identify $\Pro^1 \times \Pro^1$ as a quadric $\mathcal{Q}$ in $\Pro^3$. We know that $\Pro^1 \times \Pro^1$ has a natural embedding (Segre) into $\Pro^3$. Similar to the Clebsch cubic we can understand $\Pro^3$ as a hyperplane in $\Pro^4$ with $\sum\limits_{i=0}^{4}x_i=0$. Now let $\mathcal{Q} \cong \Pro^1 \times \Pro^1 \subset \Pro^3$ be the quadric given by 
	$$\mathcal{Q}: \sum\limits_{i=0}^{4}x_i=\sum\limits_{i=0}^4x_i^2=0.$$
	
	$G_{20}$ acts on $\mathcal{Q}$ by permutations of coordinates in a similar way as described in the previous section for the Clebsch cubic.

	We found the orbits of length 5 explicitly on $\widetilde{S}$. Observe that the orbits $\mathcal{O}_2$ and $\mathcal{O}_3$ lie in $\mathcal{Q}$, whereas $\mathcal{O}_1$ does not. Hence we may assume $\mathcal{K}_1=\mathcal{O}_2$ and $\mathcal{K}_2=\mathcal{O}_3$.\\
	 Additionally we can check computationally that the points of each of the orbits lie in general position (i.e. no 2 on a line in $\mathcal{Q}$ and no 4 on a plane). Hence we can indeed consider the blow up of each of these orbits which will yield the Clebsch cubic surface and together with \autoref{Lem_LinkClebschP1xP1} shows that this blow up is indeed the inverse link of the described contraction.\\
	Furthermore we see that the orbits $\mathcal{O}_2$ and $\mathcal{O}_3$ are essentially the same orbits, only permuted by complex conjugation. In fact these two orbits are interchanged by an automorphism of a quadric which is proven in \autoref{Thm_G40}.
\end{proof}

Now we can finally state the last Proposition we need for the proof of \autoref{Main_theorem}.

\begin{Proposition}
	\label{Prop_OrbitsP1xP1}
	The only $G_{20}$-orbits of length $r < 8$ on $\Pro^1 \times \Pro^1$ are:
	\begin{enumerate}[label=\alph*)]
		\item The unique orbit $\mathcal{K}$ described in \autoref{Lem_OrbitP1xP14} of length $4$.
		\item The two orbits $\mathcal{K}_1$ and $\mathcal{K}_2$ described in \autoref{Lem_OrbitsP1xP15} of length $5$ where the $5$ points lie in general position.	
	\end{enumerate}
\end{Proposition}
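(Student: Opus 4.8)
The plan is to establish \autoref{Prop_OrbitsP1xP1} by combining the orbit-stabilizer theorem with the explicit orbit analysis already carried out on the Clebsch cubic surface $\widetilde{S}$, transported through the contraction $\sigma$. First I would dispose of the lengths that are impossible for purely group-theoretic reasons: by orbit-stabilizer, $r$ must divide $|G_{20}|=20$, so the only candidate lengths below $8$ are $r\in\{1,2,4,5\}$. As in the proof of \autoref{Prop_OrbitClebsch}, the stabilizer of a point in a length-$1$ orbit would be all of $G_{20}$, and a length-$2$ orbit would force the stabilizer to contain the subgroup $F\cong D_{10}$; since $K\cong C_5$ and the order-$2$ generator $\sigma_{(25)(34)}\in H\cong C_4$ have no common fixed locus, neither can occur. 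This eliminates $r=1$ and $r=2$.

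For the surviving lengths $r=4$ and $r=5$, I would invoke \autoref{Lem_OrbitP1xP14} and \autoref{Lem_OrbitsP1xP15} directly, since these already construct the orbits and pin down their number. The content left to verify in the Proposition is therefore the \emph{completeness} and \emph{uniqueness} claims together with the general-position assertion in part (b). For uniqueness I would argue via the link \eqref{MainDiagram}: the blow-up $\sigma$ is $G_{20}$-equivariant, so any $G_{20}$-orbit of length $r<8$ on $\Pro^1\times\Pro^1$ whose points avoid the five contracted lines $F_1,\dots,F_5$ lifts bijectively to a $G_{20}$-orbit of the same length on $\widetilde{S}$, and conversely an orbit on $\widetilde{S}$ disjoint from the $(-1)$-curves pushes down. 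By \autoref{Prop_OrbitClebsch} the only short orbits on $\widetilde{S}$ are $\mathcal{O}$ (length $4$) and $\mathcal{O}_1,\mathcal{O}_2,\mathcal{O}_3$ (length $5$), so I would check which of these meet the exceptional lines: $\mathcal{O}_2$ and $\mathcal{O}_3$ lie on $\mathcal{Q}$ and descend to $\mathcal{K}_1,\mathcal{K}_2$, whereas $\mathcal{O}_1$ does not lie on $\mathcal{Q}$ and hence corresponds to the five contracted lines rather than to a short orbit downstairs. This yields exactly one length-$4$ orbit and two length-$5$ orbits, matching the statement.

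The general-position claim in part (b) is already recorded inside the proof of \autoref{Lem_OrbitsP1xP15}, namely that no two of the five points lie on a ruling of $\mathcal{Q}$ and no four are coplanar; I would simply reference that computation rather than redo it. The main obstacle is the bookkeeping of the correspondence between orbits upstairs and downstairs: one must be careful that the four points of $\mathcal{K}$, which lie on the rulings $F_{11},F_{12},F_{21},F_{22}$ by \autoref{Lem_OrbitP1xP14}, still avoid the five \emph{contracted} lines $F_1,\dots,F_5$ so that the lift to $\mathcal{O}$ is clean, and dually that $\mathcal{O}_1$ genuinely corresponds to contracted curves and not to a phantom short orbit. Once this dictionary between the Mori-cone data of \autoref{Lem_LinkClebschP1xP1} and the explicit point configurations is made precise, the Proposition follows by assembling \autoref{Lem_OrbitP1xP14}, \autoref{Lem_OrbitsP1xP15}, \autoref{Prop_OrbitClebsch}, and the orbit-stabilizer divisibility constraint.
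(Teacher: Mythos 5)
Your overall skeleton matches the paper's proof: orbit--stabilizer kills lengths $3,6,7$ (the paper states this for $6,7,8$), small lengths are excluded by comparison with the Clebsch cubic via \autoref{Prop_OrbitClebsch}, and lengths $4$ and $5$ are referred to \autoref{Lem_OrbitP1xP14} and \autoref{Lem_OrbitsP1xP15}. The paper actually handles \emph{all} lengths $<4$ at once by the lifting argument (an orbit of length $<4$ is disjoint from the fundamental-point orbit, hence lifts to an orbit of the same length on $\widetilde{S}$, contradicting \autoref{Prop_OrbitClebsch}); your separate fixed-point argument for $r=1,2$ also works, but only after one notes that the permutation model of the $G_{20}$-action on $\mathcal{Q}\subset\Pro^3$ makes the computations of \autoref{Prop_OrbitClebsch} available downstairs.

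There is, however, a genuine error in the step that is supposed to deliver completeness for $r=5$: your dictionary between orbits upstairs and downstairs is exactly inverted. By \autoref{Cor_5skewlinesClebsch} the points $U_i\in\mathcal{O}_2$ and $W_i\in\mathcal{O}_3$ \emph{lie on} the contracted lines $L_i=F_i$, so under $\sigma$ both $\mathcal{O}_2$ and $\mathcal{O}_3$ collapse onto the five fundamental points of the blow-up; they do not descend to two distinct short orbits. It is $\mathcal{O}_1$ that is disjoint from $L_1,\dots,L_5$ (check directly that $V_1=(0:-1:1:1:-1)$ satisfies none of the equations in \autoref{Cor_5skewlinesClebsch}, and use equivariance), so $\mathcal{O}_1$ descends isomorphically to one length-$5$ orbit, while the \emph{second} length-$5$ orbit downstairs is the fundamental-point orbit $\sigma(L_1\cup\dots\cup L_5)$ itself, which lifts to no point orbit on $\widetilde{S}$ at all. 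Your assignment (``$\mathcal{O}_2,\mathcal{O}_3$ descend to $\mathcal{K}_1,\mathcal{K}_2$; $\mathcal{O}_1$ corresponds to the contracted lines'') contradicts \autoref{Cor_5skewlinesClebsch} and breaks the count: under it $\sigma(\mathcal{O}_2)=\sigma(\mathcal{O}_3)$ would be a single orbit and the honest orbit $\sigma(\mathcal{O}_1)$ would be unaccounted for. The confusion comes from conflating the blow-down $\sigma$ with the coordinate identification $\Pro^1\times\Pro^1\cong\mathcal{Q}$ used in \autoref{Lem_OrbitsP1xP15}: the statements ``$\mathcal{O}_2,\mathcal{O}_3$ lie on $\mathcal{Q}$'' and ``$\mathcal{K}_1=\mathcal{O}_2$, $\mathcal{K}_2=\mathcal{O}_3$'' refer to the same point sets in the ambient $\Pro^3$ lying on both surfaces, not to images under $\sigma$. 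With the corrected dictionary (downstairs orbits are $\sigma(\mathcal{O})$, $\sigma(\mathcal{O}_1)$, and the fundamental points), your lifting argument does close the proof.
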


\begin{proof}
	It remains to show that there are no other orbits than the ones described in \autoref{Lem_OrbitP1xP14} and \autoref{Lem_OrbitsP1xP15}.\\
	 Orbits of length 6, 7 or 8 can not exists by the orbit-stabilizer-theorem as $6,7,8 \nmid 20=\left|G_{20}\right|$. Assume there is an orbits of length less than 4. Then an orbit of this length would also exist in the Clebsch cubic surface but \autoref{Prop_OrbitClebsch} tells us, that they do not exist there.
\end{proof}

\section{Proof of \autoref{MaintechnicalResult}}
\label{Sec_proof}
The link \eqref{MainTechResult1} in \autoref{MaintechnicalResult} is the only $G$-Sarkisov links starting from the quintic del Pezzo surface $S_5$. From \autoref{Lem_OrbitsDP5} we know that $\pi$ is the blow up of the unique $G_{20}$-orbit $\left\{Q_1, Q_2\right\}$ of length 2.\\
Now \autoref{Cor_5skewlinesClebsch} tells us that there are five disjoint lines on the Clebsch cubic which we can contract to obtain $\Pro^1 \times \Pro^1$ and \autoref{Prop_OrbitsP1xP1} says that we need to consider two different cases for birational maps starting from $\Pro^1 \times \Pro^1$. 

\begin{Lemma} [Orbit of length 4]
	\label{Lem_BlowupOrbit4P1xP1}
	Let $\tau: \widetilde{S} \to \Pro^1 \times \Pro^1$ be the blow up of the four points $P_1,...P_4$ in the orbit $\mathcal{K}$ and let $E_1,...,E_4$ be the corresponding exceptional curves. Then the proper transforms of the described rulings are $(-2)$-curves. This means that  the resulting surface $\widehat{S}$ is not a del Pezzo surface. 
\end{Lemma}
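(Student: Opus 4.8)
The plan is to exploit the \emph{grid} structure of the orbit $\mathcal{K}$ recorded in \autoref{Lem_OrbitP1xP14}: its four points are exactly the pairwise intersections of the two rulings $F_{11}, F_{12}$ of one family with the two rulings $F_{21}, F_{22}$ of the other. The decisive consequence is a purely combinatorial one: each of these four rulings passes through \emph{exactly two} of the four blown-up points. Everything else is a one-line intersection computation, and this incidence is the only point that genuinely needs the lemma.

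First I would fix the relevant numerology on $\Pro^1 \times \Pro^1$. A ruling $F$ satisfies $F^2 = 0$, while $-K_{\Pro^1 \times \Pro^1}$ has bidegree $(2,2)$, so $K \cdot F = -2$. Writing $\tau\colon \widehat{S} \to \Pro^1 \times \Pro^1$ for the blow-up of $\mathcal{K} = \{P_1,\dots,P_4\}$ with exceptional curves $E_1,\dots,E_4$, we have $K_{\widehat{S}} = \tau^* K + \sum_{i=1}^{4} E_i$, and the proper transform of a ruling, say $F_{11}$ passing through $P_i$ and $P_j$, is $\widetilde{F}_{11} = \tau^* F_{11} - E_i - E_j$.

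Then I would compute, using $\tau^* F_{11} \cdot E_k = 0$, $E_k \cdot E_l = -\delta_{kl}$ and $E_i \cdot E_j = 0$:
\begin{align*}
\widetilde{F}_{11}^{\,2} = (\tau^* F_{11})^2 + (E_i + E_j)^2 = 0 + (-1) + (-1) = -2.
\end{align*}
Since blowing up distinct smooth points of the smooth rational curve $F_{11}$ leaves its proper transform isomorphic to $\Pro^1$, the curve $\widetilde{F}_{11}$ is a smooth rational curve of self-intersection $-2$, i.e. a $(-2)$-curve; the same computation applies verbatim to all four rulings, proving the first assertion. For the canonical degree the fastest route is adjunction: a smooth rational curve satisfies $\widetilde{F}_{11}^{\,2} + K_{\widehat{S}} \cdot \widetilde{F}_{11} = 2g - 2 = -2$, whence $K_{\widehat{S}} \cdot \widetilde{F}_{11} = 0$ (equivalently, $K_{\widehat{S}} \cdot \widetilde{F}_{11} = K \cdot F_{11} - (E_i + E_j)^2 = -2 + 2 = 0$ by direct expansion).

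Finally I would conclude: $-K_{\widehat{S}} \cdot \widetilde{F}_{11} = 0$ shows that $-K_{\widehat{S}}$ fails the Nakai--Moishezon (Kleiman) positivity condition, since an ample divisor must meet every curve strictly positively. Hence $-K_{\widehat{S}}$ is not ample and $\widehat{S}$ is not a del Pezzo surface. I do not expect a real obstacle here; the only care needed is the bookkeeping that each ruling contains precisely two centres, so that the self-intersection drops by exactly $2$ from $0$ to $-2$, which is immediate from the grid description in \autoref{Lem_OrbitP1xP14}.
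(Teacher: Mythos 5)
Your proof is correct and takes the same route as the paper: the paper's entire proof is ``This follows immediately from \autoref{Lem_OrbitP1xP14},'' and your argument is exactly the spelled-out version of that claim, using the grid structure of $\mathcal{K}$ (each ruling contains precisely two of the four centres) to compute that the proper transforms are $(-2)$-curves and then invoking Nakai--Moishezon to rule out ampleness of $-K_{\widehat{S}}$. Nothing to correct; you have simply supplied the bookkeeping the paper omits.
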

%If we blow up these four points the exceptional curves $E_1,...,E_4$ will have self-intersection $(-2)$ by Bezout's theorem which makes this a bad link. 
\begin{proof}
 This follows immediately from \autoref{Lem_OrbitP1xP14}.
\end{proof}
\autoref{Lem_BlowupOrbit4P1xP1} tells us that we can not continue from $\Pro^1 \times \Pro^1$ to obtain a $G$-Sarkisov link by blowing up the oribit $\mathcal{K}$ of length 4.

\begin{Lemma}
	\label{Lem_Blowuporbit5P1xP1}
	Let $\tau: \widetilde{S} \to \Pro^1 \times \Pro^1$ be a blow up of the five points $P_1,...P_5$ in the orbits $\mathcal{K}_1$ or $\mathcal{K}_2$ respectively. Then one of the following holds for $\tau$ :
	\begin{enumerate}[label=\alph*)]
		\item $\tau$ is the same as the blow up $\sigma$ described in \eqref{MainDiagram}, so that $\pi \circ \tau^*=\psi^{-1}$.
		\item  $\tau$ is the same as the blow up $\gamma$ described in diagram \eqref{MainTechResult2} of \autoref{MaintechnicalResult}, so that $(\psi \circ \tau^* \circ \pi)=\psi \circ \phi=\chi$ is a $G_{20}$-birational map $S_5 \dashrightarrow S_5$. 
	\end{enumerate}
\end{Lemma}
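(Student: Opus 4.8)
The plan is to combine a degree count with the uniqueness of the Clebsch cubic and then to read off the two cases from the two extremal rays of the $G_{20}$-invariant Mori cone of $\widetilde{S}$. First I would observe that by \autoref{Prop_OrbitsP1xP1} the five points of $\mathcal{K}_1$, respectively $\mathcal{K}_2$, lie in general position, so each blow up $\tau$ produces a smooth del Pezzo surface of degree $8-5=3$. As recorded after \autoref{Lem_SmoothBlowupS5}, the Clebsch cubic is the only smooth cubic surface admitting a faithful $G_{20}$-action; hence in either case $\widetilde{S}$ is the Clebsch cubic, whose $\tau$-exceptional locus consists of five disjoint $(-1)$-lines $F_1,\dots,F_5$.

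Next I would invoke \autoref{Prop_Contraction5lines}: since $\Pic^{G_{20}}(\widetilde{S})=\Z^2$, the $G_{20}$-invariant Mori cone has exactly two extremal rays, one realised by a pair of disjoint lines (the contraction $\pi$ to $S_5$) and the other by five disjoint lines (the contraction $\sigma$ to $\Pro^1\times\Pro^1$). The class $F_1+\dots+F_5$ spans one of these rays, and contracting that ray is by construction $\tau^{-1}$, returning to $\Pro^1\times\Pro^1$. Hence the only other contraction of $\widetilde{S}$ is $\pi$, so every such $\tau$ fits into a $G_{20}$-Sarkisov link $\Pro^1\times\Pro^1\dashrightarrow S_5$, namely $\pi\circ\tau^{-1}$.

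To match the two orbits with the two cases I would compare the blown-up orbit with the image $\sigma(L_1\cup\dots\cup L_5)$ of the five lines of \autoref{Cor_5skewlinesClebsch}. This image is a $G_{20}$-orbit of length five, so by \autoref{Prop_OrbitsP1xP1} it equals $\mathcal{K}_1$ or $\mathcal{K}_2$; fix notation so that it is $\mathcal{K}_1$. Then blowing up $\mathcal{K}_1$ is precisely the inverse of $\sigma$, giving $\tau=\sigma$ and $\pi\circ\tau^{-1}=\psi^{-1}$, which is case a). For the remaining orbit the blow up $\gamma:=\tau$ is not the inverse of $\sigma$; contracting the other extremal ray by $\delta=\pi$ then produces the link \eqref{MainTechResult2}, and composing with $\psi$ yields the self-map $\chi$ of case b).

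The main obstacle is this last step, namely showing that the $\mathcal{K}_2$-blow up really gives a link distinct from the inverse of \eqref{MainDiagram} rather than reproducing $\psi^{-1}$ up to a $G_{20}$-automorphism. The tool I would use is the quadric automorphism $\nu$ of \autoref{Thm_G40} with $\nu(\mathcal{K}_1)=\mathcal{K}_2$: since $\nu\notin G_{20}$, the two blow ups are conjugate by $\nu$ but by no element of $G_{20}$, so the induced $\chi$ cannot be $G_{20}$-biregular. One then confirms that $\chi$ is non-trivial by tracking its effect on the two twisted cubics $E_1,E_2$ of bidegrees $(2,1)$ and $(1,2)$ appearing in the remark after \autoref{Lem_LinkClebschP1xP1}.
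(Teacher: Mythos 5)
Your proposal is correct and takes essentially the same route as the paper's own proof: one of the two orbits (normalised to be $\mathcal{K}_1$) reproduces the blow up $\sigma$, so that $\pi\circ\tau^{-1}=\psi^{-1}$, while blowing up the other orbit again yields the Clebsch cubic, whose only remaining $G_{20}$-equivariant contraction lands in $S_5$, producing the link \eqref{MainTechResult2} and the self-map $\chi$ --- the paper justifies this last contraction by the uniqueness of the smooth quintic del Pezzo surface where you invoke \autoref{Prop_Contraction5lines}, which is an equally valid (indeed more explicit) version of the same step. The only place where your reasoning is looser than necessary is the final paragraph: to see that the $\mathcal{K}_2$-link is genuinely different and $\chi$ is not biregular, one does not need the automorphism $\nu$ at all --- since $\mathcal{K}_2$ is disjoint from $\mathcal{K}_1$, the maps $\phi$ and $\psi^{-1}$ have different base loci, so $\chi=\phi\circ\psi$ visibly contracts the five twisted cubics $\pi(F_i)$ to points and hence cannot extend to an isomorphism.
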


\begin{proof}
	It is clear that we can obtain case \emph{a)} if we blow up the five points in $\mathcal{K}_1$ or $\mathcal{K}_2$, i.e. $\tau = \sigma$ in the link \eqref{MainDiagram}. We get back exactly the model of the Clebsch cubic we had before because the elements in $\mathcal{K}_1$ or $\mathcal{K}_2$ are the points we obtained by the contraction described in \autoref{Lem_LinkClebschP1xP1}. For symmetric reasons we may assume that these are the points in $\mathcal{K}_1$. Therefore  $\pi \circ \tau^{*} = \left(\sigma \circ \pi^*\right)^{-1}=\psi^{-1}$ as described in \eqref{MainDiagram}.\\
	\autoref{Prop_OrbitsP1xP1} tells us that the two orbits $\mathcal{K}_1$ and $\mathcal{K}_2$ are interchanged by an automorphism. Let us now consider the blow up $\tau: \Pro^1 \times \Pro^1 \to \widetilde{S}$ of the orbit $\mathcal{K}_2$, which is not the same blow up as $\sigma$. Then we may contract the two $(-1)$-curves, $E_1$ and $E_2$, on the Clebsch cubic.\\
	This gives us back $S_5$ because the smooth quintic del Pezzo surface is unique. This means that $\phi \circ \tau^* \circ\pi$ is a birational map $S_5 \dashrightarrow S_5$. We obtain that $\psi \circ \tau^* \circ \pi=\psi \circ \phi =\chi: S_5 \dashrightarrow S_5$ as shown in \eqref{Fig_Blow_Up_orbit_5_P1_P1}. This is a birational map $S_5 \dashrightarrow S_5$ which is not biregular.
\end{proof}

	\begin{align}
		\tag{$\clubsuit$}
		\label{Fig_Blow_Up_orbit_5_P1_P1}
		\begin{split}
	\xymatrix{
		&\widetilde{S}\ar@{->}[ld]_{\pi}\ar@{->}[dr]^{\sigma}&& \widetilde{S}\ar@{->}[ld]_{\tau}\ar@{->}[dr]^{\pi}\\
		S_5\ar@{-->}[rr]_{\psi}&&\mathbb{P}^1\times\mathbb{P}^1\ar@{-->}[rr]_{\phi}&&S_5}
	\end{split}
	\end{align}

\autoref{Lem_BlowupOrbit4P1xP1} and \autoref{Lem_Blowuporbit5P1xP1} tell us that there is no $G_{20}$-equivariant link starting from $\Pro^1 \times \Pro^1$, that leads to a different minimal surface than the quintic del Pezzo surface or $\Pro^1 \times \Pro^1$ itself. This together with \autoref{Prop_Picard_Z2_Clebsch} finalises the proof of \autoref{MaintechnicalResult} and implies the first two parts of \autoref{Main_theorem}.\\

In \autoref{MaintechnicalResult} we additionally stated that $\mathrm{Bir}^{G_{20}}(S_5)$ is of order 40. In fact one can show that 

\begin{Theorem}
	\label{Thm_G40}
	Let $S_5$ be the smooth del Pezzo surface of degree 5, and let $G_{20}\cong C_5 \rtimes C_4$ be a subgroup of order 20 in  $\Aut\left(S_5\right)$. Then
	$$
	\mathrm{Bir}^{G_{20}}\left(S_5\right)=G_{40},
	$$
	where $G_{40} \cong C_2 \times G_{20}$.
\end{Theorem}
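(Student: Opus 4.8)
The plan is to read $\mathrm{Bir}^{G_{20}}(S_5)$ as the group of $G_{20}$-equivariant birational self-maps, i.e. the normalizer $N:=N_{\mathrm{Bir}(S_5)}(G_{20})$, and to pin it down from the link classification already established. First I would record the biregular part. Since $G_{20}\cong\mathrm{GA}(1,5)$ is a maximal, self-normalizing subgroup of $\mathfrak{S}_5\cong\Aut(S_5)$ (it is the normalizer of a Sylow $5$-subgroup, of index $6$), we get $\Aut^{G_{20}}(S_5)=N_{\mathfrak{S}_5}(G_{20})=G_{20}$, while a direct check shows the centralizer $C_{\mathfrak{S}_5}(G_{20})$ is trivial (the centralizer of a $5$-cycle is $C_5$, which meets the centralizer of the order-$4$ part trivially). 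The same holds on $\Pro^1\times\Pro^1\cong\mathcal{Q}$: as $G_{20}$ acts on $\{\sum x_i=0\}$ through its irreducible $4$-dimensional representation, Schur's lemma gives a trivial centralizer in $\Aut(\mathcal{Q})\subset\mathrm{PGL}(V_4)$, and since $G_{20}$ is complete ($Z(G_{20})=1$, $\mathrm{Out}(G_{20})=1$) one obtains $\Aut^{G_{20}}(\Pro^1\times\Pro^1)=G_{20}$ as well. Because $\mathrm{Out}(G_{20})=1$, the quotient $N/C$ equals $\mathrm{Inn}(G_{20})\cong G_{20}$ and $N=G_{20}\cdot C$ with $G_{20}\cap C=Z(G_{20})=1$, where $C:=C_{\mathrm{Bir}(S_5)}(G_{20})$. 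Hence $|\mathrm{Bir}^{G_{20}}(S_5)|=20\cdot|C|$, and everything reduces to showing $C\cong C_2$.

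For the generator I would take the self-map $\chi:S_5\dashrightarrow S_5$ produced in \autoref{Lem_Blowuporbit5P1xP1} by composing \eqref{MainTechResult1} with \eqref{MainTechResult2}, that is $\chi=\phi\circ\psi$. Since both $\psi$ and $\phi$ consist only of blow-ups of $G_{20}$-orbits and contractions of $G_{20}$-invariant line configurations, each is $G_{20}$-equivariant, so $\chi$ commutes with $G_{20}$ and is non-biregular; thus $\chi\in C$ and $\chi\notin G_{20}$. For the upper bound $C=\langle\chi\rangle$ I would invoke the equivariant Sarkisov program together with \autoref{MaintechnicalResult}: the only $G_{20}$-Mori fibre spaces $G_{20}$-birational to $S_5$ are $S_5$ and $\Pro^1\times\Pro^1$, and the only links are $\psi^{\pm1}$ and $\phi^{\pm1}$. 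Any equivariant birational self-map is therefore an alternating word in these links and in the vertex automorphism groups; the part commuting with $G_{20}$ only sees the trivial centralizers at the two vertices and the two links, and an $S_5\to\Pro^1\times\Pro^1\to S_5$ loop is either trivial ($\psi^{-1}\psi$, $\phi\phi^{-1}$) or $\chi^{\pm1}$. Hence $C$ is the cyclic group generated by $\chi$.

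The heart of the proof, and the step I expect to be the main obstacle, is to prove that $\chi$ is an involution, $\chi^2=\mathrm{id}$, so that $C\cong C_2$ rather than $\Z$. The mechanism is the symmetry interchanging the two length-$5$ orbits $\mathcal{K}_1=\mathcal{O}_2$ and $\mathcal{K}_2=\mathcal{O}_3$ on the quadric $\mathcal{Q}$: these are complex-conjugate (related by $i\mapsto-i$), so the real structure $\kappa$ of $\mathcal{Q}$ swaps $\mathcal{K}_1\leftrightarrow\mathcal{K}_2$ while commuting with the coordinate action of $G_{20}$. I would use this to show that the link $\phi$ through $\mathcal{K}_2$ is carried onto the inverse link $\psi^{-1}$ through $\mathcal{K}_1$, which forces $\phi\psi=(\phi\psi)^{-1}$, i.e. $\chi^2=\mathrm{id}$. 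Since $\kappa$ is only anti-holomorphic, this heuristic must be made rigorous algebraically: I would lift $\chi$ to the common resolution obtained by blowing up $\mathcal{K}_1\cup\mathcal{K}_2$ on $\mathcal{Q}$, compute the induced transformation of the Picard lattice, and check it is an involution inducing the trivial action on $\Pic(S_5)$. As $\Aut(S_5)=\mathfrak{S}_5$ acts faithfully on $\Pic(S_5)$, this gives that $\chi^2$ is biregular and equal to the identity. Verifying that the exchange genuinely yields an algebraic involution, and not merely an anti-holomorphic one, is exactly where the explicit coordinates of \autoref{Lem_OrbClebsch5} and \autoref{Cor_5skewlinesClebsch} enter, and is the delicate point.

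Finally I would assemble the group. As $\chi$ commutes with $G_{20}$, satisfies $\chi^2=\mathrm{id}$, and generates a subgroup meeting $G_{20}$ trivially (being non-biregular), the product $\langle G_{20},\chi\rangle$ is the internal direct product $G_{20}\times\langle\chi\rangle\cong C_2\times G_{20}$. Combined with $\mathrm{Bir}^{G_{20}}(S_5)=G_{20}\cdot C$ and $C=\langle\chi\rangle\cong C_2$, this yields $\mathrm{Bir}^{G_{20}}(S_5)=G_{40}\cong C_2\times G_{20}$ of order $40$, proving the theorem. The two points I would be most careful to justify are that no outer automorphism of $G_{20}$ is realised by a link (handled uniformly by $\mathrm{Out}(G_{20})=1$, giving $N=G_{20}\cdot C$) and the involutivity $\chi^2=\mathrm{id}$ described above.
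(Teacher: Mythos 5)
Your reduction via completeness of $G_{20}$ (so that $\mathrm{Bir}^{G_{20}}(S_5)=G_{20}\times C$ with $C$ the centralizer of $G_{20}$ in $\Bir(S_5)$) is sound, and your final answer is the right one, but two of your intermediate claims fail, and they are precisely the points where the paper's proof does its work. The factual error is your Schur's lemma step: the centralizer of $G_{20}$ in $\Aut\left(\Pro^1\times\Pro^1\right)$ is \emph{not} trivial. Schur's lemma controls the centralizer in $\mathrm{GL}(V_4)$; in $\mathrm{PGL}(V_4)$ the centralizer of an irreducible $\rho$ is the group of characters $\lambda$ with $\rho\otimes\lambda\cong\rho$. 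Since $G_{20}$ has irreducible representations of dimensions $1,1,1,1,4$, its $4$-dimensional irreducible is unique, so $\rho\otimes\lambda\cong\rho$ for \emph{every} character $\lambda$; this gives a centralizer $\cong C_4$ in $\mathrm{PGL}_4$, and the subgroup preserving the (unique) invariant quadric is the $C_2$ corresponding to the order-two character. That nontrivial centralizing involution is exactly the element $ab\colon\left([x_1:y_1],[x_2:y_2]\right)\mapsto\left([-x_1:y_1],[-x_2:y_2]\right)$ of the paper's proof, i.e.\ the generator of the $C_2$ factor of $G_{40}$; your proposal denies the existence of the very element that produces the answer.

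This error then blocks the step you yourself flag as the heart, $\chi^2=\mathrm{id}$, which in your write-up remains unproven: complex conjugation is anti-holomorphic, and your fallback (an action on the Picard lattice of a common resolution) is delicate because for non-biregular maps the induced correspondences $q_*p^*$ on $\Pic$ do not compose functorially, so an involutive lattice action would not by itself yield $\chi^2=\mathrm{id}$. The paper dissolves both difficulties at once by changing model: conjugating by $\psi$, the extra generator becomes \emph{biregular} on $\Pro^1\times\Pro^1$. Indeed $\psi\circ\phi=\sigma\circ\tau^{-1}$, and $\sigma$ and $\tau$ contract the same five disjoint $(-1)$-curves on $\widetilde{S}$ (by \autoref{Prop_Picard_Z2_Clebsch} there is only one $G_{20}$-orbit of five skew lines generating the second extremal ray), so $\sigma\circ\tau^{-1}$ is an honest automorphism of the quadric interchanging $\mathcal{K}_1$ and $\mathcal{K}_2$; it is the involution $ab$, it visibly commutes with $G_{20}$, and involutivity is manifest. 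So the correct route is the paper's: identify $\mathrm{Bir}^{G_{20}}(S_5)$ with $\operatorname{Norm}_{\Aut\left(\Pro^1\times\Pro^1\right)}(G_{20})$ and compute that normalizer inside the ruling-preserving subgroup, rather than trying to force an involution out of the non-biregular map $\chi$ on the $S_5$ model.
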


\begin{proof}
	We need to find the normalizer $G_{40}=\operatorname{Norm}_{\Aut(\Pro^{1}\times \Pro^{1})}(G_{20})$. Obviously, it is enough to find $G_{40} \cap H$, where $H$ is the subgroup of $\Aut(\Pro^{1}\times \Pro^{1})$ which preserves rulings. Certainly, $G_{40} \cap H$  lies inside the group $\operatorname{Norm}_{H} (D_{10})$. The normalizer of $D_{10}$ in $\Aut(\Pro^{1})$ is equal to $D_{20}$ and generated by $D_{10}$ and the involution $[x:y]\mapsto[-x:y]$. Thus $G_{40}\cap H$ lies inside the group $<D_{10}, a, b>$, with 
	\begin{align*}
	&a:\left([x_1:y_1],[x_2:y_2]\right) \mapsto \left([-x_1:y_1],[x_2:y_2]\right)\\ &b: \left([x_1:y_1],[x_2:y_2]\right)\mapsto\left([x_1:y_1],[-x_2: y_2]\right).
\end{align*}
 One can easily check that only $ab$ normalizes the group $G_{20}$ and $G_{40}\cong C_{2}\times G_{20}$.

\end{proof}

This proof was communicated to me by Artem Avilov and I thank him for thus completing the proof of \autoref{Main_theorem}.

%%To understand the action og $G$ on $\mathbb{P}^1 \times \mathbb{P}^1$ better I consider $G_{20}$ as a subgroup of $S_5$, where it is generated by $\sigma_{(12345)}\sigma_{(2354)}$, and take the representation $(x_1 : x_2 : x_3 : x_4 : x_5), \sum\limits_{i=1}^{5}{x_i}=0$. We know that there is the natural Segre - embedding of $\mathbb{P}^1 \times \mathbb{P}^1$ into $\mathbb{P}^3$. Requiring our five coordinates to sum up to 0 gives us a representation of this embedding into $\mathbb{P}^4$. The corresponding group action is a simple permutation.
%%In our case we know $d < 8 = deg(\mathbb{P}^1 \times \mathbb{P}^1)$. We have
%%\begin{itemize}
%%\item $d=1$. Such an orbit does not exist because we would require $x_1=x_2=x_3=x_4=x_5=0$ which is a contradiction.

%%\item $d=2$. Such an orbit does not exist because if $d=2$ the stabilizer of these points was $D_{10} \supseteq \sigma_{(12345)}$ requiring $x_1=x_2=x_3=x_4=x_5=0$ which is a contradiction.

%%\item $d=3$. Such an orbit does not exist because $3\nmid 20 = |G_{20}|.$

%%\item $d=4$ There exists such an orbit because $\sigma_{(12345)}$ fixes $(1: \zeta : \zeta^2 : \zeta^3 : \zeta^4)$ with $\zeta^5=1$.

%%\item $d=5$. There exists such an orbit because $\sigma_{(2354)}$ fixes $(0: 1: \lambda : \lambda^2 : \lambda^3 )$ with $\lambda^4=1$.

%%\item $d=6$. Such an orbit does not exist because $6\nmid 20 = |G_{20}|.$

%%\item $d=7$. Such an orbit does not exist because $7\nmid 20 = |G_{20}|.$

%%\end{itemize}
\newpage

\bibliography{References}{}
\bibliographystyle{plain}
\end{document}